\tikzset{
     lface/.style={dotted,     decoration={markings, mark=at position 1.0 with {\arrow[>=angle 60]{>}}}, postaction={decorate}},
   lsquare/.style={very thick, decoration={markings, mark=at position {0.5*\pgfdecoratedpathlength+3.5pt} with {\arrow{open square}}}, postaction={decorate}},
  ldiamond/.style={very thick, decoration={markings, mark=at position {0.5*\pgfdecoratedpathlength+5.5pt} with {\arrow{open diamond}}}, postaction={decorate}},
  lvertex/.style={draw, very thick, circle}
}
\newcommand\underparen[1]{%
  \mathop{\vtop{\m@th\ialign{##\crcr
        $\hfil\displaystyle{#1}\hfil$\crcr
        \noalign{\kern3\p@\nointerlineskip}$\m@th\setbox\z@\hbox{$\braceld$}%
  \bracelu\leaders\vrule \@height\ht\z@ \@depth\z@\hfill\braceru$\crcr}}}\limits}
\newcommand*{\defeq}{\mathrel{\rlap{%
                     \raisebox{0.3ex}{$\m@th\cdot$}}%
                     \raisebox{-0.3ex}{$\m@th\cdot$}}=}
\newtheorem{theorem}{Theorem}[section]
\newtheorem{proposition}[theorem]{Proposition}
\newtheorem{question}[theorem]{Question}
\newtheorem{example}[theorem]{Example}
\newtheorem{definition}[theorem]{Definition}
\newtheorem{algorithm}[theorem]{Algorithm}
\newtheorem{remark}[theorem]{Remark}
\begin{document}
\title{Eberhard-type theorems with two kinds of polygons}

\author{Sebastian Manecke}

\address{FB 12 - Institut f\"ur Mathematik\\
  Goethe-Universit\"at Frankfurt,
  Robert-Mayer-Str. 10\\
  D-60325 Frankfurt am Main, Germany\\
  E-mail: manecke@math.uni-frankfurt.de}

\begin{abstract}
  Eberhard-type theorems are statements about the realizability of a
  polytope (or more general polyhedral maps) given the valency of its
  vertices and sizes of its polygonal faces up to a linear
  degree of freedom. We present new theorems of Eberhard-type where we
  allow adding two kinds of polygons and one type of vertices. We also
  hint towards a full classification of these types of results.
\end{abstract}

\keywords{Eberhard-type theorems; polyhedral maps; $3$-polytopes;
  topological graph theory}

\maketitle
\section{Overview}

The classical Eberhard theorems are two results on the
constructability of $r$-valent $3$-polytopes for a given sequence
$(p_k)_{k \geq 3}$, where $p_k \in \mathbb{N} = \{0, 1, 2, \dots\}$ describes
the number of occurrences of each $k$-gon. For us, a \textbf{sequence}
$a$ will always be a function $\mathbb{N} \setminus \{0, 1, 2\} \to \mathbb{N}$ with
finite support.

The original formulations were (see \cite{ConvexPolytopes}):
\begin{theorem}[Eberhard's theorem for $3$-valent
$3$-polytopes]\label{thm:eberhard:3} Let $(p_3, p_4, p_5, \break p_7, \dots,
p_m)$ be a sequence of natural numbers for which
\begin{align}
  \sum_{3 \leq k \leq m \atop k \neq 6}(6 - k) \cdot p_k = 12,
  \label{eq:eberhard:3}
\end{align}
holds. Then there exists a number $p_6$ and a $3$-valent
$3$-polytope which has exactly $p_k$ $k$-gons for each $3 \leq k \leq
m$.
\end{theorem}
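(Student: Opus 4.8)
The plan is to translate the statement into a combinatorial problem on the sphere and then to realize the prescribed polygons as isolated ``defects'' inside large flat patches of the regular hexagonal tiling, the free parameter $p_6$ being exactly the number of hexagons used as filler. First I would apply Steinitz's theorem: a graph is the edge graph of a convex $3$-polytope if and only if it is simple, planar and $3$-connected, and such a polytope is $3$-valent precisely when its graph is cubic. Hence it suffices to construct a simple, $3$-connected, cubic plane graph --- equivalently a cubic polyhedral map on $S^2$ --- having $p_k$ faces of size $k$ for every $k \neq 6$ and an arbitrary number $p_6$ of hexagons. Euler's formula together with $2|E| = 3|V| = \sum_k k\,p_k$ gives $\sum_{k \neq 6}(6-k)\,p_k = 12$, so \eqref{eq:eberhard:3} is necessary; the theorem is the converse, and the argument should exploit that the hexagon count is free. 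I would organize everything around the discrete Gauss--Bonnet principle: assign a $k$-gon the ``curvature'' $6-k$, so that a region of the hexagonal tiling $H$ is flat; the task is then to merge the prescribed non-hexagonal faces --- whose curvatures sum to $12$ --- with flat hexagonal material into a closed cubic surface of total curvature $12$, that is, a sphere.

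Next I would realize the sphere as two polygonal disks $D^{+}$ and $D^{-}$ (``caps'') joined, through a short cylindrical strip of $H$, along a common boundary cycle. Two sub-results are needed. The first is a \emph{splitting lemma}: the multiset with one entry $6-k$ for each prescribed non-hexagonal $k$-face has all entries $\le 3$, has at least four entries (equation \eqref{eq:eberhard:3} has no solution with fewer non-hexagonal faces), and sums to $12$; any such multiset splits into two sub-multisets each of sum $6$ --- a short case analysis whose three extremal instances $\{3,3,3,3\}$, $\{2,2,2,2,2,2\}$ and $\{1,1,\dots,1\}$ are the tetrahedron, the cube and the dodecahedron. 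Give one part to $D^{+}$, the other to $D^{-}$. The second is a \emph{cap lemma}: given a sub-multiset of face sizes $\neq 6$ with curvature sum $6$ and a sufficiently long prescribed boundary cycle, there is a cubic disk --- interior vertices of valence $3$, boundary a simple cycle --- whose inner faces are exactly those polygons together with hexagons, and whose boundary is the prescribed cycle. Granting this, glue $D^{+}$, a cylinder of $H$ of matching circumference, and $D^{-}$; taking the cylinder long and the caps hexagon-rich makes the resulting map simple and $3$-connected, so Steinitz's theorem yields the polytope, and the only faces present are the prescribed $k$-gons and hexagons.

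I would prove the cap lemma by induction on the number $m$ of non-hexagonal faces in the cap. For small $m$ one writes down explicit gadgets embedded in a hexagonal disk with controlled boundary (two triangles; three quadrilaterals; six pentagons; a heptagon with enough small companion faces; and so on). For the inductive step, peel off a ``corner patch'' carrying a single prescribed $k$-gon from the part of a large $H$-region near the boundary: inserting one $k$-gon with $k < 6$ into $H$, or one $k$-gon with $k > 6$ as a negatively curved companion patch, perturbs the disk's boundary in a purely local and predictable way, after which the remaining $m-1$ faces are placed in the untouched part by the inductive hypothesis. The main obstacle is precisely this bookkeeping: keeping the boundary a \emph{simple} cycle of the prescribed length after all defects have been inserted, preventing two defects (notably two large faces, or two triangles) from interfering, and verifying $3$-connectivity of the glued map. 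This is the technical heart of all Eberhard-type arguments, and exactly what the present paper must redo with two kinds of polygons; in the classical case the step can be shortened by appealing to known explicit constructions of hexagonal cones and fullerene caps carrying a prescribed number of pentagonal --- and, more generally, $k$-gonal --- disclinations.
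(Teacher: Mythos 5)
The paper does not prove Theorem~\ref{thm:eberhard:3} at all: it is quoted as classical background from Gr\"unbaum's \emph{Convex Polytopes}, so there is no in-paper argument to compare yours against. Judged on its own terms, your outline (Steinitz reduction, curvature bookkeeping via \eqref{eq:eberhard:3}, two caps of curvature $6$ joined by a hexagonal cylinder) is a legitimate and standard way to organize such a proof, and your splitting lemma is correct: the positive curvatures lie in $\{1,2,3\}$ and sum to at least $12$, and a short case check shows a sub-multiset summing to exactly $6$ always exists.

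The genuine gap is the cap lemma, which you state but do not prove, and which \emph{is} Eberhard's theorem: essentially all of the difficulty of the result lies in constructing a cubic disk containing a prescribed multiset of defects whose boundary fits a prescribed mate. Two specific points make your inductive step non-routine. First, ``prescribed boundary cycle'' must mean the prescribed sequence of boundary degrees, not just the length: when two disks are glued along a path, a boundary vertex of degree $d$ in one must meet a vertex of degree $5-d$ in the other so that the identified vertex becomes $3$-valent, and arranging this complementarity across all the insertions is exactly the ``self-fitting'' machinery the paper develops in Section~\ref{sec:construction}; your sketch never addresses it. Second, the claim that inserting a single non-hexagonal face ``perturbs the disk's boundary in a purely local and predictable way'' is wrong as stated: a curvature defect is a cone point, and it changes the holonomy of the entire boundary rather than a local stretch of it; this is precisely why distinct defects interfere and why the exceptional cases on the torus in Theorems~\ref{thm:eberhard:extended:3} and~\ref{thm:eberhard:extended:4} exist (the holonomy argument of Izmestiev et al.\ cited in the paper). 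As it stands, the proposal is a correct reduction of the theorem to an unproven lemma of essentially equal difficulty, not a proof.
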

\begin{theorem}[Eberhard's theorem for $4$-valent
$3$-polytopes]\label{thm:eberhard:4} Let $(p_3, p_5, \dots, p_m)$ be a
sequence of natural numbers for which
\begin{align}
  \sum_{3 \leq k \leq m \atop k \neq 4}(4 - k) \cdot p_k = 8,
  \label{eq:eberhard:4}
\end{align}
holds. Then there exists a number $p_4$ and a $4$-valent
$3$-polytope which has exactly $p_k$ $k$-gons for each $3 \leq k \leq
m$.
\end{theorem}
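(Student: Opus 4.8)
\emph{Setup.}
By Steinitz's theorem it suffices to construct, for some value of $p_4$, a $3$-connected $4$-regular plane graph whose bounded faces have the prescribed sizes. I would begin by recording the arithmetic behind the hypothesis: in any $4$-regular plane graph $2E=4V$, so $F=V+2$ and hence $\sum_k(k-4)p_k = 2E-4F = -8$; this is exactly \eqref{eq:eberhard:4}, and isolating $p_3$ gives $p_3 = 8 + \sum_{k\ge 5}(k-4)p_k \ge 8$. So the prescribed number of triangles is precisely ``eight units of curvature'' plus one unit for every unit of defect $k-4$ carried by a large face, and the whole construction should be organised around this identity. (The $3$-valent Theorem~\ref{thm:eberhard:3} is available, but does not obviously help: the natural bridge, the medial graph of a $3$-valent polytope, creates a number of triangular faces governed by that polytope's vertex count, which Theorem~\ref{thm:eberhard:3} leaves uncontrolled.)

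\emph{A patchwork framework.}
The basic move is this: if two polygonal disks $A,B$ are glued along a common boundary cycle $C$ on which every vertex has $A$-degree $3$ and $B$-degree $3$, then in $A\cup_C B$ every vertex of $C$ has degree $3+3-2 = 4$. So the plan is to realize the target as $N \cup_C S$, where $S$ is a fixed small cap of four triangles with a degree-$3$ boundary $4$-cycle $C$ (essentially the wheel $W_4$), and $N$ is a single large disk with the same boundary $C$, carrying the remaining $p_3-4$ triangles, all $p_k$ $k$-gons with $k\ge 5$, and an arbitrary number of quadrilaterals --- the quadrilaterals being what realizes the unconstrained parameter $p_4$. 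The curvature bookkeeping is then automatic: $N$ must have interior curvature $(p_3-4)-\sum_{k\ge5}(k-4)p_k = 4$, the same as $S$, so there is no obstruction in principle. The remaining task is to build $N$ out of a flat sheet of quadrilaterals of arbitrary dimensions, one ``$k$-defect patch'' for each requested large face --- a disk containing a single $k$-gon, exactly $k-4$ triangles, and some quadrilaterals, whose boundary meshes into a square grid --- and a ``curvature-$4$ corner'' that closes this sheet off into a disk with boundary the $4$-cycle $C$.

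\emph{The crux and the obstacle.}
All of the genuine difficulty lies in constructing the $k$-defect patches and in making their interfaces compatible with a surrounding grid of quadrilaterals. Concretely I would take a $k$-gon, attach triangles along $k-4$ of its edges so as to straighten its boundary toward that of a quadrilateral, pad the remaining sides and the triangle tips with strips of quadrilaterals, and verify by hand that the resulting outer boundary has all of its internal vertices of degree $3$; for $k=5,6,7$ I would exhibit the patches explicitly as base cases and then obtain a $k$-defect patch from a $(k-1)$-defect patch by absorbing one more triangle, or by an analogous direct construction. Once $N$, the flat sheet it contains, and the cap $S$ have been assembled into a $4$-regular plane graph, the last step --- which I expect to be the hardest to pin down cleanly --- is to verify $3$-connectedness rather than mere $4$-regularity, since a careless gluing can create a separating pair of vertices; one can either insist that every gadget and every gluing cycle be internally $3$-connected, invoking the standard fact that amalgamating $3$-connected disks along a facial cycle of length $\ge 3$ is again $3$-connected, or carry the whole construction out at the level of polyhedral maps and finish with a routine reduction. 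The degenerate case with no large faces is just the octahedron and its ``elongations''. I expect the curvature accounting and the reduction to gadgets to be routine, and the gadget-interface engineering together with the $3$-connectivity check to be the real content of the proof.
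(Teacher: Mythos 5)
The paper does not prove Theorem~\ref{thm:eberhard:4}: it is quoted as classical background (from Gr\"unbaum's \emph{Convex Polytopes}), so there is no in-paper argument to compare against. Your strategy is, in outline, the classical one: package each $k$-gon ($k\ge 5$) together with $k-4$ triangles and some quadrangles into a patch whose boundary meshes with a quadrangular grid, embed these patches in a large sheet of quadrangles, and spend the remaining eight units of curvature on caps. Your curvature bookkeeping is correct, and the local gluing rule ($3+3-2=4$ along a shared boundary cycle) is sound.

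The genuine gap is that the entire mathematical content --- the explicit $k$-defect patches, the ``curvature-$4$ corner'', and the verification that their boundaries actually mesh with a quadrangular grid and with the cap along $C$ --- is announced rather than carried out. ``Attach triangles along $k-4$ of its edges so as to straighten the boundary'' is precisely the step that needs explicit case analysis: the triangles cannot be placed arbitrarily without producing interior vertices of degree other than $4$ or faces that fail to meet properly, and you exhibit no base case. Two further points deserve flagging. First, the decomposition $N\cup_C S$ with $S$ the wheel $W_4$ forces the final polytope to contain a vertex all four of whose incident faces are triangles; this is an avoidable self-imposed constraint (the rhombicuboctahedron, a natural solution for $p=[8\times 3]$ with $p_4=18$, has no such vertex), and it makes the construction of $N$ harder than the usual arrangement in which the eight leftover triangles are split into two groups of four capping opposite ends of a cylinder of quadrangles. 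Second, $3$-connectivity --- equivalently, polyhedrality of the resulting map --- is exactly the kind of condition that this paper's machinery (edge patches and the ``meet properly'' check) exists to certify; appealing to the amalgamation of $3$-connected disks still requires each gadget to be verified internally, which again presupposes having the gadgets in hand. As it stands the proposal is a correct plan with the right invariants, but not yet a proof.
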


If $p_k$ is the number of $k$-gons of a $3$-valent (resp. $4$-valent)
polytope, then \eqref{eq:eberhard:3} (resp.  \eqref{eq:eberhard:4})
holds as an immediate consequence of Euler's relation and double
counting of the number of edges. Thus these theorems answer the
question under which condition a sequence $p$ that suffices the
natural combinatorial conditions has a realization as a polytope.

In the 1970's Barnette, Ernest, Gr\"unbaum, Jendrol', Jucovi{\v{c}},
Trenkler and Zaks~\cite{jucovivc1973theorem, barnette1971toroidal, grunbaum1969planar, zaks1971analogue, jendrol1977generalization} extended
these results to polyhedral maps, which are graph embeddings on a
closed topological $2$-manifold (or \textbf{surface}) generalizing the
combinatorics of $3$-polytopes. In the two setups described by the
classical Eberhard theorems there is now a complete
characterization for which sequences $(p_k)_{k\geq 3}$ and
$(v_k)_{k \geq 3}$ and surfaces $S$ there exists a polyhedral map on
$S$ with $p_k$ $k$-gons and $v_k$ $k$-valent vertices when choosing
the value of $p_6$ and $v_3$, resp. $p_4$ and $v_4$, appropriately.

We want to call the sequences $p = (p_k)_{k \geq 3}$ and
$v = (v_k)_{k \geq 3}$ which count the number of $k$-gons and
$k$-valent vertices of a polyhedral map $M$ the \textbf{$p$-vector} and
the \textbf{$v$-vector} of $M$ and call the pair $(p, v)$ to be
realizable on a surface $S$, if there exists a polyhedral map $M$ on
$S$ with $p$-vector $p$ and $v$-vector $v$.

An easy construction shows that we can in fact find infinitely many
$p_6$ and $v_3$, resp. $p_4$ and $v_4$, such that $(p, v)$ is
realizable as a polyhedral map on a fixed $S$. By using Euler's
relation and an easy double counting argument one cannot increase
$p_6$ and $v_3$, resp.\ $p_4$, $v_4$ independently from each other and
thus one can deduce that there is a linear relation between these
numbers. We want to propose the generalized Eberhard problem:

\begin{question}\label{quest:gen:eberhard}
  Let $p = (p_k)_{k \geq 3}$, $v = (v_k)_{k \geq 3}$,
  $q = (q_k)_{k \geq 3}$ and $w = (w_k)_{k \geq 3}$ be sequences and
  $S$ be a surface. Does there exist infinitely many $c, d \in \mathbb{N}$ and
  a polyhedral map on $S$ with $p$-vector $p + c \cdot q$
  and $v$-vector $v + d \cdot w$?
\end{question}

Theorem~\ref{thm:eberhard:3} and its generalization to polyhedral maps
answer this question for $q = (0, 0, 0, 1, 0, \dots)$ and
$w = (1, 0, \dots)$, whereas Theorem~\ref{thm:eberhard:4} and its
generalizations answer this question for $q = (0, 1, 0 \dots)$,
$w = (0, 1, 0, \dots)$. It is easy to check, that the only missing
possibility for $q$ and $w$ with exactly one non-zero entry, where
such a statement can be true, is $q = (1, 0, \dots)$,
$w = (0, 0, 0, 1, 0, \dots)$. This case is in fact just the dual of
Theorem~\ref{thm:eberhard:3}, and therefore all cases with exactly one
non-zero entry in both $q$ and $w$ have been classified.

Question~\ref{quest:gen:eberhard} for $q$, $w$ with more than one
non-zero entry was first considered by DeVos et
al.~\cite{devos2010eberhard}, who gave an answer in the case of $v =
(1, 0, \dots)$, $q = (0, 0, 1, 0, 1, 0, \dots)$ and $w = (1, 0,
\dots)$ for any surface. We will also consider similar theorems of
this type in this article. To state them more easily, let us introduce
the following notation: Define $[i]$ to be the sequence $a$ with $a_i
= 1$ and $a_j = 0$ for $i \neq j$. We then set $[a_{k_1} \times k_1,
  a_{k_2} \times k_2, \dots, a_{k_n} \times k_n] \defeq \sum_{i = 1}^n
a_{k_i} [k_i]$, where only entries $a_{k_i}$ not equal to zero
occur. If $a_{k_i} = 1$, we will just write $k_i$ instead of $1 \times
k_i$.

In his master thesis~\cite{manecke_master_thesis}, the author gave a
complete answer to Question~\ref{quest:gen:eberhard} in the case that
$q$ has precisely two non-zero and coprime entries and $w$ has one
non-zero entry. We will state the full theorem in
Sec.~\ref{sec:polymap} and give the ideas for the constructions used
in the proofs in Sec.~\ref{sec:construction}. The last Section,
Sec.~\ref{sec:3:4}, will show how these constructions yield one case
of the full statement in \cite{manecke_master_thesis}, that is
to say, the following two theorems:

\begin{theorem}\label{eberhard:3:5:4}
  Let $p$ and $v$ be a pair of admissible sequences for an orientable
  closed $2$-manifold $S$ and $k \in \mathbb{N}$. Then there exists infinitely
  many $c, d \in \mathbb{N}$ for which there exists a polyhedral map on $S$
  with $p$-vector $p + c \cdot [(3k + 1) \times 3, 3k+5]$ and
  $v$-vector $v + d \cdot [4]$.
\end{theorem}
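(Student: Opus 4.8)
The plan is to reduce the theorem to the construction of a single gadget patch and feed that into the construction framework of Section~\ref{sec:construction}; I begin by extracting the linear relation between $c$ and $d$ forced by admissibility. Write $q = [(3k+1)\times 3,\ 3k+5]$ and $w = [4]$. One checks directly that $\sum_j (4-j)\,q_j = (3k+1) - (3k+1) = 0$ and $\sum_j (4-j)\,w_j = 0$, so the curvature identity $\sum_j (4-j)(p_j + v_j) = 4\chi(S)$ underlying admissibility of $(p,v)$ passes to $(p + c\,q,\ v + d\,w)$ for every $c, d$. Double counting edges in any polyhedral map with $p$-vector $p + c\,q$ and $v$-vector $v + d\,w$ gives $\sum_j j\,(p_j + c\,q_j) = 2E = \sum_j j\,(v_j + d\,w_j)$; since $\sum_j j\,q_j = 12k+8$ and $\sum_j j\,w_j = 4$, this rearranges to $4d = \sum_j j p_j - \sum_j j v_j + (12k+8)\,c$. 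Admissibility makes the right-hand side a multiple of $4$, so $d = d_0 + (3k+2)\,c$ for an integer $d_0 = d_0(p,v)$, and $d \in \mathbb{N}$ once $c$ is large. It therefore suffices to realize on $S$, for infinitely many $c$, the pair $(p + c\,q,\ v + (d_0 + (3k+2)c)\,w)$.

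\emph{Reduction to a patch.} By the framework of Section~\ref{sec:construction}, to conclude it is enough to produce two things. First, one polyhedral map on $S$ realizing the pair for some admissible value $c = c_0$; because $(p,v)$ is admissible, such a map comes from the polyhedral-map Eberhard theorems recalled in Section~\ref{sec:polymap}, possibly after applying the $k$-independent conversion patches of Section~\ref{sec:construction} to the classical free faces and vertices. Second, a \emph{gadget patch} $\Gamma_k$: a polygonal disk whose interior faces are exactly $3k+1$ triangles and a single $(3k+5)$-gon, all of whose interior vertices are $4$-valent, having the same boundary cycle --- with the same valencies along it --- as a fixed simpler disk $\Gamma_0$. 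Substituting $\Gamma_k$ for a copy of $\Gamma_0$ in a polyhedral map then raises $c$ by $1$ and $d$ by $3k+2$ and leaves every other entry of the $p$- and $v$-vectors fixed, and iterating produces the infinite family.

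\emph{Building $\Gamma_k$.} The construction of $\Gamma_k$ hinges on the identities $3k+5 = 3(k+1)+2$ and $3k+1 = 3(k+1)-2$. I would start from the $(3k+5)$-gon and fan triangles off a sub-path of its boundary toward a short spine, arranging the incidences so that each newly created vertex meets exactly four edges; the triangle count then comes out to $3k+1$ by itself, and the boundary cycle has a length depending only on the spine. The companion disk $\Gamma_0$ need not be a triangulated polygon: allowing it to carry a few interior $4$-valent vertices and one neutral face of the matching size supplies exactly the freedom needed for the two boundary valency sequences to agree. I would fix the pattern by working out $k = 0$ and $k = 1$ explicitly and then describing $\Gamma_k$ uniformly.

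\emph{The main obstacle.} I expect this last step to be the hard part. Requiring every interior vertex of $\Gamma_k$ to be $4$-valent, on top of the prescribed face multiset, pins down via Euler's relation and edge--face counting essentially all of $\Gamma_k$'s combinatorial parameters, so there is very little room to work; and one must still (a) match $\Gamma_k$ to a usable $\Gamma_0$ along the boundary and (b) check that the map obtained after each substitution is again a genuine polyhedral map --- no two faces meeting in more than an edge, no short vertex links. Point (b) is handled in the standard way, by first enlarging the faces near the substitution site with a further $k$-independent patch so that the substitution is local and generic. The topology of $S$ causes no extra difficulty: the base map already carries the correct Euler characteristic, and every substitution happens inside a single face, so no handle surgeries --- which would perturb $p$ and $v$ --- are needed.
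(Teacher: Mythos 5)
Your arithmetic reduction in the first paragraph is fine and consistent with the admissibility discussion in Section~\ref{sec:polymap}, but the core of your argument has two genuine gaps. First, your substitution scheme replaces a sub-disk $\Gamma_0$ of an \emph{existing} polyhedral map by a gadget $\Gamma_k$; for this to do anything you must (i) guarantee that the base map coming from Theorem~\ref{thm:eberhard:extended:4} actually contains a copy of the specific configuration $\Gamma_0$, and (ii) guarantee that after one substitution the resulting map again contains a copy of $\Gamma_0$, so that the process can be iterated to reach infinitely many $c$. Neither point is addressed, and neither is automatic: the base map is essentially arbitrary, and $\Gamma_k$ need not contain $\Gamma_0$. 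Moreover, if $\Gamma_0$ carries ``a few interior $4$-valent vertices and one neutral face,'' then the substitution removes those faces and vertices, so it does \emph{not} change the $p$-vector by exactly $q$ and the $v$-vector by exactly $(3k+2)[4]$ as you claim. The paper avoids all of this by replacing \emph{every} face of the base map simultaneously (Algorithm~\ref{alg:map} together with Proposition~\ref{thm:main:const}): the only compatibility needed is that the boundary valency tuple be self-fitting, which is a condition on a single patch and is independent of the base map.

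Second, the actual construction of the gadget --- a disk with all interior vertices $4$-valent whose faces are $3k+1$ triangles and one $(3k+5)$-gon, with controlled boundary valencies --- is the entire content of the theorem, and you explicitly leave it as ``the hard part.'' The paper does this work concretely: it exhibits an explicit expansion $4$-patch $\mathcal{P}_N$ with outer tuple $(1,2,1,3,2,3)$ made of triangles and pentagons, an $o$-$4$-gonal patch $\mathcal{P}_F$, verifies the polyhedral property of $\mathcal{P}_N$ via its edge patch, and then uses the edge/vertex replacement moves of Algorithms~\ref{alg:edge:replacement:3:4:1}--\ref{alg:edge:replacement:3:4:3} to grow the pentagons into $(3k+5)$-gons while inserting triangles in the correct ratio, preserving $4$-valence and the proper-meeting condition at each step. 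Your closing remark that the polyhedral property ``is handled in the standard way'' also understates a real verification: the paper introduces the edge-patch criterion precisely because properness of face intersections can fail across patch boundaries. As written, your proposal identifies the right shape of the problem but supplies neither the gadget nor a sound mechanism for inserting it repeatedly.
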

\begin{theorem}\label{eberhard:3:7:4}
  Let $p$ and $v$ be a pair of admissible sequences for an orientable
  closed $2$-manifold $S$ and $k \in \mathbb{N}$. Then there exists infinitely
  many $c, d \in \mathbb{N}$ for which there exist a polyhedral map on $S$
  with $p$-vector $p + c \cdot [(3k + 3) \times 3, 3k+7]$ and
  $v$-vector $v + d \cdot [4]$.
\end{theorem}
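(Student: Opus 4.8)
The plan is to split the proof into an arithmetic reduction and one geometric construction, and then to iterate that construction. Write $m = 3k+5$ in the situation of Theorem~\ref{eberhard:3:5:4}, and $m = 3k+7$ in that of Theorem~\ref{eberhard:3:7:4}, so that in both cases the $p$-increment is $q = (m-4)\cdot[3] + [m]$ and the $v$-increment is $w = [4]$. First I would note that double counting of edges pins $d$ down: if a polyhedral map has $p$-vector $p + c\cdot q$ and $v$-vector $v + d\cdot w$, then $4d = \sum_i i\,(c\,q)_i = c\,(3(m-4)+m) = c\,(4m-12)$, hence $d = (m-3)\,c$, and substituting $\Delta V = \Delta F = m-3$, $\Delta E = 2m-6$ into Euler's relation shows the carrier surface has the same genus as the one for $(p,v)$. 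So the theorems are exactly the assertion that $\bigl(p + c\,q,\ v + (m-3)c\,w\bigr)$ is realizable on $S$ for infinitely many $c$. Admissibility of $(p,v)$ for $S$ is precisely the hypothesis making the necessary counting and congruence conditions for this hold, and the framework of Sections~\ref{sec:polymap}--\ref{sec:construction}, applied to the present $q$ and $w$, supplies a ``seed'' polyhedral map $M_0$ on $S$ realizing $(p + c_0 q,\ v + (m-3)c_0\,w)$ for some $c_0 \geq 1$; since $m-4 \geq 1$, such an $M_0$ carries a triangle, and more precisely it can be arranged to contain a fixed small polyhedral sub-disk $D_0$ (a few triangles around a low-valent central vertex) whose boundary cycle has a prescribed valency profile in $D_0$.

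The construction then produces a polyhedral disk $D_1$ with the same boundary cycle and the same boundary valency profile as $D_0$ but carrying one extra $m$-gon, $m-4$ extra triangles, and $m-3$ extra vertices, all of the new vertices of valency $4$. Concretely one obtains $D_1$ from $D_0$ by splicing in an annular ``collar'': a patch carrying a single $m$-gon together with a strip of $m-4$ triangles wrapped around it, designed so that one of its boundary cycles matches $\partial D_0$ while the other reproduces it, and so that identifying the collar's boundary vertices with those of the adjacent pieces gives them valency $4$ — the valency bookkeeping being forced by $d = (m-3)c$. Replacing the copy of $D_0$ inside $M_0$ by $D_1$ yields a polyhedral map on the same surface $S$ with $p$-vector $p + (c_0+1)q$ and $v$-vector $v + (m-3)(c_0+1)w$, and the output again contains a copy of $D_0$. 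Iterating this replacement $j$ times gives, for every $j \geq 0$, a polyhedral map on $S$ realizing $\bigl(p + (c_0+j)q,\ v + (m-3)(c_0+j)w\bigr)$, which yields the required infinitely many pairs $(c,d)$.

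The step I expect to be the main obstacle is building the collar and checking that each replacement preserves polyhedrality — that every face of the new map is still bounded by a simple cycle, that any two distinct faces still meet in at most a common edge or a common vertex, and (in the case $S = S^2$) that $3$-connectivity is retained. This is where the residue of $m$ modulo $3$ enters: the way the strip of $m-4$ triangles can be closed up around the $m$-gon without creating short non-facial cycles is genuinely different for $m \equiv 2 \pmod 3$ and for $m \equiv 1 \pmod 3$, which is why the two residue classes are handled by the two separate Theorems~\ref{eberhard:3:5:4} and~\ref{eberhard:3:7:4} with two slightly different collars, and why the remaining class $m \equiv 0 \pmod 3$ falls outside these statements and is covered by another member of the family in Section~\ref{sec:polymap}. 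A secondary point to verify is that the seed $M_0$ can indeed be taken to contain the disk $D_0$; this I would read off from the base-case part of the construction in Section~\ref{sec:construction} rather than prove separately.
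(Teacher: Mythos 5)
Your arithmetic reduction ($d=(m-3)c$, genus preservation) is correct, and the local-splice strategy you sketch is a legitimately different route from the paper's, which instead replaces \emph{every} face of a starting map by an explicit patch (Algorithm~\ref{alg:map} together with Proposition~\ref{thm:alg:polymap} and Proposition~\ref{thm:main:const}). But as written your argument has a genuine gap: all of the mathematical content of an Eberhard-type theorem lives in the construction, and you supply neither of the two constructions your plan requires. First, the seed map $M_0$ realizing $(p+c_0q,\,v+(m-3)c_0w)$ with $c_0\geq 1$ is assumed to come from ``the framework of Sections~\ref{sec:polymap}--\ref{sec:construction}''; this is circular, since producing even one realization with $c_0\geq 1$ is essentially the theorem for a single value of $c$, and the framework only delivers it once one has exhibited the three patches $\mathcal{P}_N$, $\mathcal{P}_F$, $\mathcal{P}_P$ demanded by Proposition~\ref{thm:main:const} (Theorems~\ref{thm:eberhard:extended:3} and \ref{thm:eberhard:extended:4} give maps with surplus hexagons or quadrangles, not with surplus bundles of $(m-4)$ triangles and one $m$-gon). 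Second, the collar $D_1\setminus D_0$ is described only by the properties it must have (matching boundary valency profile, all new vertices $4$-valent, polyhedrality preserved); you explicitly flag this as ``the main obstacle'' and then do not build it. The paper's proof consists precisely of exhibiting these objects: the explicit expansion $4$-patch with outer tuple $(2,2,3,2,1,3,2,1,2,2)$ made of triangles and heptagons, the matching $o$-$4$-gonal patch, the edge patch certifying the polyhedral property, and Algorithms~\ref{alg:edge:replacement:3:4:1}--\ref{alg:edge:replacement:3:4:3}, which grow the heptagons to $(3k+7)$-gons at the cost of three triangles per step. Without analogues of these, your proposal is a proof outline, not a proof.

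Two smaller points. Your claim that the residue class $l\equiv 0\pmod 3$ ``is covered by another member of the family'' is wrong: by the classification theorem quoted at the end of Section~\ref{sec:polymap}, the case $s=3$, $3\mid l$ violates $\gcd(s,l)=1$ and has a \emph{negative} answer, which is exactly why only $3k+5$ and $3k+7$ appear. Also note that the ``infinitely many $c,d$'' in the paper is obtained by iterating the global face-replacement (cf.\ Example~\ref{ex:easy:expansion3}), not by a local splice, so even the iteration step of your plan would need its own verification that a copy of $D_0$ survives each surgery.
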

 
\section{Polyhedral maps and generalized Eberhard problems}\label{sec:polymap}
We will review basic notions from (topological) graph theory. A
\textbf{simple graph} $G$ is a finite undirected graph without loops and
multi-edges. If $G'$ is a subgraph of $G$ this is denoted by
$G' \subseteq G$. We want to write $u_1 - \dots - u_k$ for paths and
$u_1 - \dots - u_k - u_1$ for cycles. The \textbf{valence} of a vertex
is the number of incident edges.

All of our graphs are considered to be embedded into a closed
(topological) $2$-manifold, which we call \textbf{surfaces} for
brevity. We assume our $2$-manifolds to be oriented in this article. An
embedding of a simple graph with vertices $V$, edges $E$ and faces $F$
is called a \textbf{map}, provided that $G$ is simple, every vertex
$v \in V$ has valence at least $3$ and every $f \in F$ is a closed
$2$-cell (i.e. homeomorphic to a disk). The faces of a map incident to
$k$ edges (or equivalently, $k$ vertices) will be called
\textbf{$k$-gonal} faces or simply \textbf{$k$-gons}. A map on a closed
$2$-manifold is called \textbf{polyhedral}, if for every two faces
$f, f', f \neq f'$ there is either no vertex, a single vertex or a
single edge incident to both $f$ and $f'$. In these cases the two
faces are said to \textbf{meet properly}.

In Section \ref{sec:construction} we will weaken the definition of a
map to some extent to allow for $2$-valent vertices. This does not
warrant a whole new definition, so we state it here for completeness
and to avoid confusion.

An important property of polyhedral maps is that each edge contains
exactly two vertices and is contained in exactly two faces. From this
fact one can see that the concept of polyhedral maps dualizes
perfectly, i.e. if an embedding is a polyhedral map, then the dual of
the embedding is again a polyhedral map.

\begin{example}\label{rem:polymap:from:polyhedron} We can view every
  $3$-polytope as a map on a surface, where the graph of the map is
  the graph of the $3$-polytope and the embedding is held by radial
  projection onto $\mathbb{S}^2$. In this context each face of the
  $3$-polytope corresponds to one of the map. It is easy to see that
  this map is polyhedral, which gives rise to the property's
  name. Also note, that the dual map corresponding to a $3$-polytope
  is the corresponding map of the dual polytope.
\end{example}

To further strengthen the link between $3$-polytopes and polyhedral
maps on the sphere $\mathbb{S}^2$, we mention the following two results:

\begin{proposition} Every graph $G$ of a polyhedral map $M$
  is \textbf{$3$-connected}, i.e. $G$ has at least $3$ vertices and the
  deletion of any $2$ vertices leaves the graph connected.
\end{proposition}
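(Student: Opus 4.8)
The plan is to argue by contradiction, leaning entirely on the condition that any two faces of a polyhedral map meet properly. The easy part comes first: since $G$ is simple and every vertex has valence at least $3$, each vertex has at least three distinct neighbours, so $|V(G)| \ge 4 \ge 3$. I would also recall two standard facts about polyhedral maps and use them freely --- that a $2$-cell embedding has a connected graph, and that the boundary walk of every face is a simple cycle (a repeated edge would lie in only one face, and a repeated vertex is excluded by proper meeting together with a glance at the edges around that vertex). In particular each face meets each of its vertices in exactly one corner, i.e.\ occupies the sector between a single consecutive pair of edges at that vertex.

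Now suppose $G$ is not $3$-connected; since $|V(G)| \ge 4$ there is a set $T$ with $|T| \le 2$ whose removal disconnects $G$, and I take $T$ of minimum size. Write $T = \{u\}$ or $T = \{u,w\}$, let $A$ be one component of $G - T$ and $B$ the union of the rest, so there are no edges between $A$ and $B$, and, since $G$ is connected, some vertex of $T$ has a neighbour in $A$ and some has a neighbour in $B$. The key observation I would isolate is: if a face $f$ carries both an $A$-vertex and a $B$-vertex on its boundary cycle, then walking around that (simple) cycle one must pass from $A$ to $B$ and later back, each passage occurring at a vertex of $T$; since the cycle is simple these passages use distinct vertices of $T$, so $|T| = 2$ and $f$ contains \emph{both} $u$ and $w$. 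In particular, when $|T| = 1$ no face can meet both $A$ and $B$ at all.

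From here I would finish case by case. If $T = \{u\}$: the cyclic order of edges around $u$ contains an edge into $A$ and an edge into $B$, hence two consecutive edges $e, e'$ with $e$ into $A$ and $e'$ into $B$; the face between $e$ and $e'$ meets both $A$ and $B$, contradicting the observation. If $T = \{u, w\}$ --- so $G$ has no cut vertex by minimality --- then at least one of $u, w$, say $u$, must have neighbours in both $A$ and $B$, for otherwise one of them would separate $A$ on its own. Classify the edges around $u$ as pointing into $A$, into $B$, or at $w$ (the last happening at most once); there are at least two consecutive pairs of edges of differing class, and at least three of them when $uw$ is an edge (that edge alone is flanked by two such pairs, and some further switch from $A$ to $B$ supplies another). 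The face wedged into each such pair contains both $u$ and $w$ --- either its boundary meets $A$ and $B$ and the observation applies, or one of the two flanking edges is $uw$ itself --- and distinct pairs give distinct faces. If $uw \notin E(G)$, two of these faces share precisely the two vertices $u$ and $w$ and no edge; if $uw \in E(G)$, the edge $uw$ lies in only two faces, so at least one of the three-or-more faces contains $u$ and $w$ but not the edge $uw$. In either case a pair of faces does not meet properly, the desired contradiction.

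The main obstacle is not a deep step but precisely this bookkeeping in the two-vertex case: producing only two faces through $\{u,w\}$ settles nothing when $uw$ is an edge, since those can legitimately be the two faces along $uw$; this is exactly why the refinement ``three transition pairs when $uw \in E(G)$'' is needed, and why one must first rule out the possibility that neither $u$ nor $w$ sees both sides of the cut (which collapses to a cut vertex, excluded by minimality). Everything else is routine once one grants that face boundaries are simple cycles --- and that is the one structural point I would take care to justify, since the whole corner-counting argument silently rests on each face using a single corner at each of its vertices.
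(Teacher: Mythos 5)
Your argument is correct; note that the paper itself states this proposition without proof, treating it as a classical fact about polyhedral maps, so there is no in-paper argument to compare against. Your route --- contradiction via a minimum separator $T$, the observation that a face whose boundary meets both sides of the cut must pick up two distinct separator vertices on its (simple) boundary cycle, and then corner-counting in the rotation at a separator vertex --- is the standard elementary proof, and you handle the one genuinely delicate subcase correctly: when $T=\{u,w\}$ with $uw\in E(G)$, producing only two faces through $\{u,w\}$ proves nothing, and your refinement to three transition corners (forcing a face containing $u$ and $w$ but not the edge $uw$, which then meets a face along $uw$ improperly) is exactly what is needed. Two small remarks. First, the structural point you flag --- that face boundaries are simple cycles --- comes for free under this paper's definition of a map, since each face is required to be a \emph{closed} $2$-cell, i.e.\ its closure is a disk, whence its boundary is an embedded circle; you do not need the proper-meeting condition for this. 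Second, your preliminary reduction ``at least one of $u,w$ sees both $A$ and $B$, else one of them is a cut vertex'' deserves the one-line verification that if, say, $u$ has no neighbour in $B$, then $G-w$ has no edge between $A\cup\{u\}$ and $B$; this is easy but is the step that actually uses minimality of $T$. With those glosses the proof is complete.
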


\begin{theorem}[Steinitz's theorem] A
  graph is the edge graph of a $3$-polytope if and only if it is
  planar and $3$-connected.
\end{theorem}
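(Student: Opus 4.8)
The plan is to prove both implications. The ``only if'' direction is essentially already available: by Example~\ref{rem:polymap:from:polyhedron} the boundary complex of a $3$-polytope $P$ gives a polyhedral map on $\mathbb{S}^2$ whose graph is the graph of $P$; since a map on $\mathbb{S}^2$ is a planar embedding, the graph is planar, and by the Proposition on $3$-connectivity of graphs of polyhedral maps it is $3$-connected. (Alternatively one can show $3$-connectivity directly: after deleting two vertices $u, w$, every remaining vertex is joined by an edge-path, obtained by following a generic linear functional, to a vertex of some face avoiding $u$ and $w$, and such faces are connected among themselves.) So the real work is the ``if'' direction.

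Let $G$ be planar and $3$-connected. By Whitney's theorem the $2$-cell embedding of $G$ in $\mathbb{S}^2$ is unique up to homeomorphism, so the face set is combinatorially determined; fix one face $f_0$ to play the role of the outer face of a planar drawing. Place the vertices of $f_0$, in the cyclic order it prescribes, at the vertices of a convex polygon $Q \subseteq \mathbb{R}^2 \times \{0\}$, assign a positive weight $\omega_e > 0$ to each edge (e.g.\ $\omega_e = 1$), and require every interior vertex $v$ to satisfy $\sum_{u \sim v}\omega_{uv}(u - v) = 0$. This linear system has a unique solution, and by Tutte's spring-embedding theorem---which uses $3$-connectivity in an essential way---the resulting straight-line drawing is a genuine planar embedding of $G$ with outer face $Q$ in which every bounded face is a non-degenerate convex polygon.

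To turn this flat picture into a polytope, observe that the weights $\omega$ define a self-stress on the interior edges; extending by appropriate stresses on the edges of $f_0$ produces a self-stress of the full planar framework. By the Maxwell--Cremona correspondence such a framework lifts to a piecewise-linear function $z : Q \to \mathbb{R}$, affine on each face, whose gradient jumps across each edge by an amount proportional to the stress on that edge. Since all interior stresses share one sign, the lift is strictly convex across every interior edge; after choosing the sign so that $f_0$ lies on the upper hull, the lower convex hull of the lifted vertices together with the face $Q$ bounds a convex $3$-polytope $P \subseteq \mathbb{R}^3$ whose vertical projection is the Tutte drawing, so the graph of $P$ is $G$.

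I expect the two genuine obstacles to be the ``softness'' steps. First, that the Tutte solution really is a crossing-free embedding with no collapsed face: this is the heart of Tutte's theorem and is exactly where $3$-connectivity cannot be dropped. Second, that the lift yields a polytope realizing $G$ and not some coarsening of it---one must ensure the stress is nonzero on \emph{every} edge, so that no two adjacent lifted faces become coplanar and thereby merge faces of $G$; the interior edges are handled by the positive weights, but coplanarity along the boundary face must be checked and, if it occurs, removed by a small generic perturbation of the weights $\omega_e$ or of $Q$ within the open region of admissible data. An entirely different and more combinatorial route---Steinitz's original one---is to reduce $G$ to $K_4$ by $\Delta Y$- and $Y\Delta$-transformations (with series/parallel reductions) while staying $3$-connected and planar, realize $K_4$ by a tetrahedron, and observe that $Y\Delta$ corresponds geometrically to truncating a $3$-valent vertex and $\Delta Y$ to the inverse move, each preserving polytopality; there the main difficulty migrates to the reduction lemma and to keeping the graph in the right class throughout.
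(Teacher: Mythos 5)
The paper does not prove this statement: Steinitz's theorem is quoted as classical background (see \cite{ConvexPolytopes}), so there is no internal proof to compare against, and your proposal should be judged as a standalone sketch of one of the standard proofs. The overall strategy (Tutte spring embedding plus Maxwell--Cremona lifting) is a correct and well-known route, and your ``only if'' direction is fine.

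There is, however, a genuine gap at the step ``extending by appropriate stresses on the edges of $f_0$ produces a self-stress of the full planar framework.'' The Tutte weights give equilibrium only at interior vertices; the residual forces at the $k$ vertices of the outer face satisfy the three global compatibility conditions (zero total force and torque), but resolving them by stresses on the $k$ boundary edges imposes $2k-3$ independent conditions on $k$ unknowns. This is solvable exactly when $k=3$; for $k>3$ it generically fails, and then no lift exists (for the cube with outer face a square and unit weights, a dimension count $e-2v+3=-1$ shows the drawing generically carries no nonzero self-stress at all). So the argument as written only realizes graphs after you choose a \emph{triangular} outer face. The standard repair, which you must add, is: by Euler's relation a $3$-connected planar graph has a triangular face or a $3$-valent vertex (equivalently, $G$ or its dual $G^*$ has a triangle); realize whichever of $G$, $G^*$ admits a triangular outer face by the lifting argument, and recover the other by polarity, using that the dual of a $3$-connected planar graph is again $3$-connected and planar. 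With that insertion (and your already-flagged check that the stress is nonzero on every edge so no two lifted faces merge), the proof closes; your alternative $\Delta Y$/$Y\Delta$ route is also standard but, as you note, shifts all the work into the reduction lemma.
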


These two theorems combined yield that every polyhedral map on
the sphere $\mathbb{S}^2$ comes from a $3$-polytope and vice versa.

We will now turn our focus to Eberhard theorems for polyhedral maps on
surfaces. The \textbf{$p$-vector} of a map $M$ on a $2$-manifold is the
sequence $(p_3, \dots, p_m)$, where $p_k$ denotes the number of faces
with exactly $k$ vertices. Similarly the \textbf{$v$-vector} of $M$ is
the sequence $(v_3, \dots, v_n)$ where each $v_k$ is the number of
vertices of $M$ with valence $k$.

A pair of sequences $(p, v)$ is said to be \textbf{realizable as a
  polyhedral map} on the closed $2$-manifold $S$ (or short: realizable
on $S$), if there exists such a map having $p$ as its $p$-vector and
$v$ as its $v$-vector. In this language we can state the following
two generalizations of Theorems~\ref{thm:eberhard:3} and
\ref{thm:eberhard:4}, which will be central in our constructions:

\begin{theorem}[Jendrol', Jucovi{\v{c}}~\cite{jendrol1977generalization}, 1977]
  \label{thm:eberhard:extended:3}
  Each pair of sequences $p = (p_3, \dots, p_m)$ and $v = (v_3, \dots,
  v_n)$ is realizable on a closed orientable $2$-manifold with Euler
  characteristic $\chi$ for some $p_6 \in \mathbb{N}$, $v_3 \in \mathbb{N}$ if and
  only if
  \begin{align*} \sum_{k=3}^m (6-k)p_k + 2 \sum_{k=4}^n (3-k)v_k =
    6\chi&, \\ \sum_{k=3 \atop 2 \nmid k}^{m} p_k \neq 0 \text{\quad or \quad}
    \sum_{k=4 \atop 3 \nmid k}^n v_k \neq 1& &\text{ if } \chi = 2, \\
    p \neq [5, 7] \text{\quad or \quad} v \neq [v_3 \times 3]& &\text{ if } \chi
    = 0.
  \end{align*}
\end{theorem}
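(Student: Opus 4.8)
I would treat the two directions separately. \emph{Necessity} of the displayed equation is immediate from $V-E+F=\chi$ together with the double-counting identities $\sum_k k\,p_k = 2E = \sum_k k\,v_k$: eliminating $E$ gives $\sum_k(6-k)p_k + 2\sum_k(3-k)v_k = 6(V-E+F) = 6\chi$, the $k=6$ term of the first sum and the $k=3$ term of the second contributing nothing. For the exceptional clauses I would argue directly. On $\mathbb{S}^2$, if every face of $M$ is even then every cycle of the graph is even — it separates the sphere into two disks, each a union of faces, so its length has the parity of a sum of even numbers — hence the graph is bipartite; for a bipartition $A\cup B$ one gets $\sum_{u\in A}\deg u = E = \sum_{u\in B}\deg u$, and two equal degree-sums are impossible modulo $3$ when exactly one vertex has valency $\not\equiv 0\pmod 3$, so on $\mathbb{S}^2$ one of the two alternatives must hold. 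On the torus I would rule out a $3$-valent polyhedral map with $p$-vector $[5,7]$ by a short local analysis: the almost-hexagonal structure around the lone pentagon--heptagon pair is forced to close up on the torus so that two faces share two edges (equivalently, the graph acquires a $2$-cut), contradicting polyhedrality.

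For \emph{sufficiency}, the case $\chi=2$ is the classical Eberhard--Gr\"unbaum theorem, which I would quote; its standard proof realizes the face data and the vertex data by separate polytopes and amalgamates them along a triangular face, the one unreachable combination being exactly the excluded one. So assume $\chi\le 0$ and construct the map directly on $S$. I would first fix a base polyhedral map $M_0$ on $S$ with Euler characteristic $\chi$ in which all but boundedly many faces are hexagons and all but boundedly many vertices are $3$-valent, and whose non-hexagonal faces and non-$3$-valent vertices occur among those prescribed by $p$ and $v$ — such an $M_0$ exists because the equation with $\chi\le0$ forces $p$ to contain a face of size $>6$ or $v$ a vertex of valency $\ge4$, which supplies the needed defect (for $\chi=0$ one takes an all-hexagonal $3$-valent toroidal map). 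Applying hexagon-creating refinements to $M_0$, I may further assume it contains arbitrarily large all-hexagonal, $3$-valent patches.

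Next I would install the remaining prescribed faces and vertices one at a time, each by a local operation supported in such a patch: for each $k\ne 6$ still needed, an operation replacing a small block of hexagons by a configuration containing one new $k$-gon and further hexagons; dually, for each $k\ne3$ still needed, an operation turning a $3$-valent vertex into one new $k$-valent vertex and further $3$-valent vertices. Each operation is to be chosen so that it preserves polyhedrality and changes the $p$- and $v$-vectors only in the single intended entry and in $p_6$, $v_3$. Taking enough hexagon-creating refinements and large enough residual patches, iterating these operations drives the $p$- and $v$-vectors to agree with $p$ and $v$ in all entries except $6$ and $3$, which end up as whatever they turn out to be — precisely the freedom allowed in the statement.

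The Euler bookkeeping and the polyhedrality of each individual patch are routine. The real obstacle I anticipate is keeping polyhedrality intact \emph{globally} after all the surgeries while prescribing a non-hexagonal face spectrum \emph{and} a non-$3$-valent vertex spectrum at once: the operations must be designed, kept far apart, and made compatible with the ambient hexagonal structure so that no two faces ever end up sharing two vertices or two edges, and one must verify that every admissible pair $(p,v)$ is genuinely attainable. Determining exactly when that last point fails is what produces the exceptional lists for $\chi\in\{0,2\}$; pinning these down — in particular the delicate ``$\ne1$'' in the spherical case — is where I would expect the bulk of the work to lie.
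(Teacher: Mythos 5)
This statement is not proved in the paper at all: it is quoted verbatim from Jendrol' and Jucovi\v{c} (1977) and used as a black box in the constructions of Sections 3 and 4, so there is no in-paper proof to compare against. Judged on its own terms, your proposal gets the necessity direction essentially right: the Euler/double-counting derivation of the main equation is the standard one, and your argument for the spherical exception (all faces even $\Rightarrow$ the graph is bipartite on $\mathbb{S}^2$, and the two degree-sums over the colour classes both equal $|E|$ but disagree modulo $3$ when exactly one vertex has valency $\not\equiv 0 \pmod 3$) is a correct and clean way to see that clause. For the toroidal exception $p=[5,7]$, $v=[v_3\times 3]$ your ``short local analysis'' is only a gesture; the paper itself points to Izmestiev et al.\ \cite{izmestiev2013there}, who make this precise via holonomy of the hexagonal structure, and some such global argument is genuinely needed --- a purely local analysis around the pentagon--heptagon pair does not by itself force two faces to meet improperly.

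The real gap is that your sufficiency direction is a research programme, not a proof, and you say as much in your final paragraph. Everything that makes Theorem~\ref{thm:eberhard:extended:3} a theorem rather than a heuristic lives in the steps you defer: exhibiting the concrete local operations that install a $k$-gon or a $k$-valent vertex inside a hexagonal, $3$-valent patch while preserving polyhedrality; showing these operations can be combined for \emph{every} admissible pair $(p,v)$ on \emph{every} orientable surface; and proving that the only unreachable pairs are exactly the two listed exceptional families (the ``only if'' of the exceptional clauses is necessity, but the ``if'' requires showing nothing else is excluded). This is precisely the content of the cited papers of Gr\"unbaum, Jucovi\v{c}--Trenkler, Barnette, and Jendrol'--Jucovi\v{c}, and it is also the same patch-replacement philosophy the present paper formalizes (Algorithms~\ref{alg:map} and~\ref{alg:expansion:patch}, Proposition~\ref{thm:main:const}) for its own new cases. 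So your outline points in the historically correct direction, but as submitted it does not constitute a proof of the statement.
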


\begin{theorem}[Barnette, Gr\"unbaum, Jendrol', Jucovi{\v{c}}, Zaks~\cite{
      jucovivc1973theorem, barnette1971toroidal, grunbaum1969planar,
      zaks1971analogue}, 1973] \label{thm:eberhard:extended:4} Each
  pair of sequences $p = (p_3, \dots, p_m)$ and $v = (v_3, \dots,
  v_n)$ is realizable on a closed orientable $2$-manifold with Euler
  characteristic $\chi$ for some $p_4, v_4 \in \mathbb{N}$ if and only if
  \begin{align*}
    \sum_{k=3}^m (4-k)p_k + \sum_{k = 3}^n (4-k)v_k = 4\chi&,\\
    \sum_{k=3}^m k p_k \equiv 0&& \pmod{2}, \\
    p \neq [3, 5] \text{\quad or \quad} v \neq [v_4 \times 4]& &\text{ if } \chi = 0,\\
    p \neq [p_4 \times 4] \text{\quad or \quad} v \neq [3, 5]& &\text{ if }
\chi = 0.
  \end{align*}
\end{theorem}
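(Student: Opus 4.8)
The plan is to treat necessity and sufficiency separately; necessity is a short computation plus one small case analysis, and sufficiency carries essentially all of the weight.

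\emph{Necessity.} The first identity is Euler's relation after double counting: each edge lies on two faces and contains two vertices, so $2E=\sum_k k\,p_k=\sum_k k\,v_k$, and hence $\sum_k(4-k)p_k+\sum_k(4-k)v_k=(4F-2E)+(4V-2E)=4(V-E+F)=4\chi$; the congruence is just $\sum_k k\,p_k=2E$. The two remaining conditions concern $\chi=0$ and forbid the pair $([3,5],[v_4\times 4])$ and its dual $([p_4\times 4],[3,5])$ on the torus, for any choice of the free parameters. Since polyhedral maps dualize, it suffices to exclude the first, i.e.\ to show that an all-$4$-valent toroidal map cannot have exactly one triangle, one pentagon, and otherwise only quadrilaterals; I would do this by a short structural argument -- tracing the straight-ahead closed curves through the $4$-valent vertices, or a local discharging argument around the triangle and the pentagon -- to force a contradiction.

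\emph{Sufficiency, $\chi=2$.} This is the Eberhard-type theorem on the sphere. I would assemble the realization from a small core by attaching local patches of two kinds: a band of quadrilaterals capped off by one prescribed face, insertable along any edge of the current map, and the dual patch that plants one prescribed vertex. Since a quadrilateral and a $4$-valent vertex each contribute $0$ to the weighted sum, these patches are neutral, so the only arithmetic left to control is that the triangles and $3$-valent vertices produced as by-products are exactly those prescribed -- which is precisely what the weighted-sum equation and the parity condition buy. In the all-$4$-valent subcase this is Theorem~\ref{thm:eberhard:4}; one can bootstrap from it and handle the prescribed $3$- and higher-valent vertices by truncations and edge-contractions that only add quadrilaterals.

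\emph{Sufficiency, $\chi=2-2g<2$.} I would obtain the genus-$g$ map from a spherical one by $g$ successive connected sums with polyhedral toroidal maps: remove a face from each summand and identify the two boundary cycles. A short count shows that one connected sum lowers $\chi$ by $2$ and lowers $\sum_k(4-k)p_k+\sum_k(4-k)v_k$ by exactly $8$, so the arithmetic is consistent precisely when each toroidal summand has $p$- and $v$-vectors that are sub-sequences of the target; concretely, split $(p,v)$ into a spherical residual part and $g$ toroidal parts (possible because deleting the toroidal parts raises the weighted sum by $8g$), realize the residual on the sphere by the previous step, realize each toroidal part from a fixed family of torus maps, and glue. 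The hard part will be the combinatorics of these gluings: choosing faces of matching size, making the valences of identified boundary vertices add up to the prescribed ones, guaranteeing that every pair of faces still meets properly afterwards, and producing toroidal building blocks rich enough to realize all sub-sequences one needs -- this polyhedrality bookkeeping, rather than any single idea, is where the real content of \cite{jucovivc1973theorem,barnette1971toroidal,grunbaum1969planar,zaks1971analogue} resides.
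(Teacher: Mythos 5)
The paper does not prove this theorem: it is quoted verbatim from the cited 1969--1977 literature (and the paper only remarks, via Izmestiev et al., on why the torus exceptions arise). So there is no in-paper proof to compare against, and your proposal has to stand on its own. As it stands it is a plan rather than a proof, and the two places where all the mathematical content lives are both deferred.

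First, the torus exclusions. Your Euler/handshake computation for the first two conditions is correct and complete. But the claim that no $4$-valent toroidal map can have $p$-vector $[3, p_4\times 4, 5]$ (one triangle, one pentagon, arbitrarily many quadrilaterals) is itself a nontrivial theorem, and ``a short structural argument \dots to force a contradiction'' is not one. The straight-ahead-curve idea is indeed the right germ (it is essentially the holonomy argument of Izmestiev et al.\ \cite{izmestiev2013there}): in an all-quadrilateral $4$-valent torus map the developing map has trivial rotational holonomy, and a single $3$-gon/$5$-gon pair contributes angle defects of opposite sign that cannot cancel in the holonomy group; but you need to actually set up the flat cone metric or the $\mathbb{Z}/4$ rotational holonomy and verify the obstruction. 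Without that, necessity is not established. Second, sufficiency. Everything you describe --- neutral quadrilateral patches, bootstrapping from Theorem~\ref{thm:eberhard:4}, splitting $(p,v)$ into a spherical residual plus toroidal summands and gluing by connected sums --- is a reasonable outline, but you explicitly park ``the combinatorics of these gluings'' and ``producing toroidal building blocks rich enough,'' which is precisely the content of the four cited papers. One concrete simplification you are missing: to raise the genus you do not need general connected sums with matching face sizes; the standard device is to join two quadrilateral faces by a tube of quadrilaterals with all new vertices $4$-valent, which drops $\chi$ by $2$ while perturbing only the free parameters $p_4$ and $v_4$, so no splitting of $(p,v)$ into toroidal parts is needed. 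Even with that, the base case on the sphere (realizing arbitrary admissible $(p,v)$ with prescribed higher-valent vertices, not just faces) and the verification that every gluing preserves the polyhedral (proper-meeting) property remain unproved in your write-up.
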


Note that special cases arise in both theorems if the surface is a
torus, i.e. if $\chi = 0$. Izmestiev et al.~\cite{izmestiev2013there}
gave a simple argument using holonomy groups to explain why these
special cases occur.

The rest of this section is devoted to find an easy characterization
for when we cannot hope Question~\ref{quest:gen:eberhard} to have a
positive answer. Let $M$ be a polyhedral map on a surface $S$ with
vertices $V$, edges $E$, and faces $F$, $p$-vector $p = (p_3, \dots, p_n)$
and $v$-vector $v = (v_3, \dots, v_m)$. Let $\chi = \chi(S)$ be the
Euler characteristic of $S$ and $e \defeq |E|$. The two basic
combinatorial results here are double-counting
\begin{align}
  2e = \sum_{k=3}^{m} k \cdot p_k = \sum_{k=3}^{n} k \cdot v_k, \label{eq:handshake}
\end{align}
and the Euler-Poincar\'e relation
\begin{align} 
 |V| - |E| + |F| = \chi. \label{eq:eulers:relation}
\end{align}

One can easily deduce from these relations, that the following two
equivalent conditions are necessary for two sequences $p$, $v$ being
the $p$- and $v$-vector of a polyhedral map:
\begin{proposition}
  Let $p$, $v$ be the $p$- and $v$-vector of a polyhedral map on a
  surface $S$. Then \eqref{eq:handshake} is true for some $e \in \mathbb{N}$
  and the following equivalent conditions hold:
  \begin{align*}
    \sum_{k=3}^m (6-k)p_k + 2 \sum_{k=4}^n (3-k)v_k &= 6\chi(S), \text{ and}\\
    \sum_{k=3}^m (4-k)p_k + \sum_{k = 3}^n (4-k)v_k &= 4\chi(S).
  \end{align*}
\end{proposition}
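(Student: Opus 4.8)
The plan is to derive both displayed identities, as well as \eqref{eq:handshake}, from the Euler--Poincar\'e relation \eqref{eq:eulers:relation} together with the fact recorded above that in a polyhedral map every edge lies in exactly two faces and contains exactly two vertices.

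First I would prove \eqref{eq:handshake} by double counting. Counting incident (edge, face) pairs face-by-face gives $\sum_{k=3}^m k\, p_k$, since a $k$-gon is incident to $k$ edges, while counting the same pairs edge-by-edge gives $2e$, since each edge lies in exactly two faces. Likewise, counting incident (edge, vertex) pairs vertex-by-vertex gives $\sum_{k=3}^n k\, v_k$ and edge-by-edge gives $2e$. Hence $2e = \sum_k k\, p_k = \sum_k k\, v_k$. Along the way I would also record $|F| = \sum_{k=3}^m p_k$ and $|V| = \sum_{k=3}^n v_k$: every face of a map has at least three sides and every vertex has valence at least three, so $p$ and $v$ enumerate \emph{all} faces and \emph{all} vertices.

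Next, using that the $k=3$ summand of $(3-k)v_k$ vanishes (so the corresponding sum may be taken from $k=3$), I would compute
\begin{align*}
  \sum_{k=3}^m (6-k)p_k + 2\sum_{k=4}^n (3-k)v_k
    &= 6\sum_{k=3}^m p_k - \sum_{k=3}^m k\, p_k + 6\sum_{k=3}^n v_k - 2\sum_{k=3}^n k\, v_k \\
    &= 6|F| - 2e + 6|V| - 4e = 6\bigl(|V| - |E| + |F|\bigr) = 6\chi(S),
\end{align*}
the last equality by \eqref{eq:eulers:relation}. The analogous, slightly shorter computation for the second identity reads
\begin{align*}
  \sum_{k=3}^m (4-k)p_k + \sum_{k=3}^n (4-k)v_k
    = 4|F| - 2e + 4|V| - 2e = 4\bigl(|V| - |E| + |F|\bigr) = 4\chi(S).
\end{align*}

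Finally, for the claimed equivalence: once \eqref{eq:handshake} is assumed for some $e \in \mathbb{N}$, the two computations above show that the first identity is equivalent to $\sum_k p_k + \sum_k v_k - e = \chi(S)$ and that the second identity is equivalent to exactly the same equation; hence the two conditions are equivalent to one another. I do not expect a genuine obstacle here: the only care needed is bookkeeping of the summation ranges — in particular the shift of the $v$-sum in the first identity from $k=4$ to $k=3$, and the different constant factors $6$ versus $4$ — together with the (easy) justification that in a map the sequences $p$ and $v$ account for every face and every vertex.
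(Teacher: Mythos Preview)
Your proposal is correct and matches the paper's own (only sketched) argument: the paper simply asserts that both identities follow from the double-counting relation \eqref{eq:handshake} together with the Euler--Poincar\'e relation \eqref{eq:eulers:relation}, which is precisely the computation you carry out. Your treatment of the equivalence---reducing both displayed equations, under \eqref{eq:handshake}, to $\sum_k p_k + \sum_k v_k - e = \chi(S)$---is also correct and in fact a bit more explicit than the paper's one-line remark.
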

Equivalent here means, that together with
$\sum_{k \geq 3} p_k = \sum_{k \geq 3} v_k$ each equation can be
deduced from the other. If $p$ and $v$ satisfy these equations, we
will call the pair $(p, v)$ \textbf{admissible} (on $S$).  We remark
that we gain precisely the conditions of
Theorems~\ref{thm:eberhard:extended:3}
and~\ref{thm:eberhard:extended:4}.

In light of Question~\ref{quest:gen:eberhard} and using the same
arguments, it is not difficult to see, that we can always assume $p$
and $v$ to be admissible. Also important to note is, that from the
same arguments we can derive similar conditions on $q$ and $w$ which
have to be fulfilled in order for Question~\ref{quest:gen:eberhard} to
be answered in the positive. We will not go into the details here and
simply state the cases resulting from these restrictions.

We will restrict our setting to $q = [q_s \times s, q_l \times l]$
having only two non-negative entries and $w = [w_r \times r]$ having
one. Let us further assume that $\gcd(q_s, q_l) = 1$. These conditions
are quite natural, as any obstruction on finding a Eberhard-type
theorem for some $q$ will also give an obstruction for $c \cdot q$,
$c \in \mathbb{N}$. As stated above, not all values $s$, $l$, and $r$ can be
obtained in this setting, only the following cases can occur:
\begin{alignat*}{6}
  (s, r) = (3, 3): \quad && q = [q_3 \times 3, q_l \times l],\qquad&&w = [3],\qquad&q_3 = \tfrac{l - 6}{\gcd(l, 3)}, &\qquad&q_l = \tfrac{3}{\gcd(l, 3)} \\
  (s, r) = (4, 3): \quad && q = [q_4 \times 4, q_l \times l],\qquad&&w = [3],\qquad&q_4 = \tfrac{l - 6}{\gcd(l, 2)}, &\qquad&q_l = \tfrac{2}{\gcd(l, 2)} \\
  (s, r) = (5, 3): \quad && q = [q_5 \times 5, q_l \times l],\qquad&&w = [3],\qquad&q_5 = l - 6,                     &\qquad&q_l = 1 \\
  (s, r) = (3, 4): \quad && q = [q_3 \times 3, q_l \times l],\qquad&&w = [4],\qquad&q_3 = l - 4,                     &\qquad&q_l = 1 \\
  (s, r) = (3, 5): \quad && q = [q_3 \times 3, q_l \times l],\qquad&&w = [5],\qquad&q_3 = 3l - 10,                   &\qquad&q_l = 1
\end{alignat*}

We will answer the fourth case in this article. The full result
by the author is the following:

\begin{theorem}[Manecke~\cite{manecke_master_thesis}, 2016]
  Let $q = [q_s \times s, q_l \times l]$, $w = [r]$ as before. Then
  there exist inifitely many $c, d \in \mathbb{N}$ and a polyhedral map $M$ on a surface $S$
  for all admissible sequences $(p, v)$ if and only if $\gcd(s, l) = 1$
  and if $s = r = 3$, then $l < 11$.
\end{theorem}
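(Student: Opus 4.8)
The plan is to prove the two directions separately; the necessity uses congruence and holonomy-type obstructions, while the sufficiency is a seed-plus-gadget construction, and it is the gadget that carries the real difficulty. For the necessity, suppose $q = [q_s\times s,\,q_l\times l]$ and $w=[r]$ lie in one of the five admissible rows but either $\gcd(s,l)>1$ or else $s=r=3$ and $l\ge 11$. In each case one must exhibit a surface $S$ and an admissible pair $(p,v)$ for which $(p+cq,\,v+dw)$ is realizable for only finitely many $(c,d)$. For $\gcd(s,l)=g>1$ I would look for an obstruction refining the double count \eqref{eq:handshake}: all faces counted by $q$ have size divisible by $g$, and one tries to leverage this, together with the admissibility line and the parity mechanism behind Theorem~\ref{thm:eberhard:extended:4}, to force a residue that some seed on a suitable surface violates. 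For $s=r=3$ and $l\ge 11$ the obstruction should be of torus type: following the holonomy computation of Izmestiev et al.~\cite{izmestiev2013there} around the $l$-gon in a map whose other faces are triangles and hexagons and whose vertices, apart from a bounded contribution of $v$, are $3$-valent, one picks up a monodromy that cannot be trivialized once $l$ is large, and this is packaged into an explicit non-realizable admissible pair. I expect the exact cutoff $l<11$ to demand the most care in this half, and would lean on \cite{manecke_master_thesis} for its final form.

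For the sufficiency, assume $\gcd(s,l)=1$ and, if $s=r=3$, also $l<11$; fix $S$ and an admissible pair $(p,v)$ on it. First I would produce a \emph{seed}: by the appropriate classical extended Eberhard theorem --- Theorem~\ref{thm:eberhard:extended:3} for $r=3$, or Theorem~\ref{thm:eberhard:extended:4} when $r=4$ --- one realizes on $S$ a polyhedral map $M_0$ with $p$-vector $p$ plus a large reservoir of hexagons (resp.\ quadrilaterals) and $v$-vector $v$ plus a large reservoir of $3$-valent (resp.\ $4$-valent) vertices. Admissibility of $(p,v)$ supplies the hypotheses, and the finitely many exceptional pairs there are harmless: once the gadget has been applied even once the $p$-vector is too large and irregular to be $[5,7]$ or $[3,5]$ or supported entirely on even sizes, so the sphere and torus exceptions cannot obstruct the final map, and one chooses $M_0$ so that they do not obstruct it either. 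Since the reservoir sizes are only linearly constrained, we keep enough freedom in them to land on the target vectors exactly --- which matters because the final $p$-vector must be \emph{precisely} $p+cq$, with no leftover hexagons, and the final $v$-vector precisely $v+dw$.

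Next I would build the \emph{conversion gadget}: a map $P_l$ of a disk, bounded by a simple cycle matching the boundary of a patch of the reservoir, whose interior realizes exactly $q_s$ faces of size $s$, $q_l$ faces of size $l$, a controlled number of reservoir faces, and a controlled number of $r$-valent vertices, with total defect over the patch --- $\sum(6-|f|)+\sum 2(3-\deg v)$, or its $4$-valent analogue --- equal to zero; this defect-balancing is exactly what ties the number of $r$-valent vertices the gadget must create to the ratio $d/c$ dictated by \eqref{eq:handshake}. Excising a reservoir patch from $M_0$ and gluing in a copy of $P_l$ adds exactly $q$ to the $p$-vector and consumes a fixed amount of reservoir; performing this in $c$ pairwise far-apart reservoir regions produces a map with $p$-vector $p+cq$, and by the linear-freedom remark one tunes the original reservoir sizes so that the remaining hexagons and the $r$-valent vertex count come out to give $v$-vector exactly $v+dw$ with $d$ the value forced by \eqref{eq:handshake}--\eqref{eq:eulers:relation}. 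Letting $c$ run through a suitable arithmetic progression (so that $d\in\mathbb{N}$) yields the required infinite family.

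The main obstacle is to guarantee that this surgery never destroys the polyhedral condition: the graph must stay simple, every valence must stay $\ge 3$, and every two faces must keep meeting properly, the dangerous spots being the boundary vertices of $P_l$, whose valences jump upon gluing, and pairs of faces one of which is an old reservoir face and the other a new gadget face. I would control this by (i) building $P_l$ to be internally polyhedral with girth at least $3$ and all of its own faces meeting properly --- introducing $2$-valent vertices in the weakened sense of Section~\ref{sec:construction} if the natural drawing needs them and suppressing them only at the end, after checking that no multi-edge appears; (ii) placing the excised patches deep inside large hexagonal (resp.\ quadrangular) blocks of $M_0$, so that any two faces that could conceivably meet improperly already lie within a single inserted copy of $P_l$; and (iii) checking the finitely many local pictures along the seam directly. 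This is precisely the bookkeeping carried out in Sections~\ref{sec:construction} and~\ref{sec:3:4} below for the row $(s,r)=(3,4)$, and the remaining rows follow the same template with one gadget family per admissible value of $l$.
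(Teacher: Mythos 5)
First, a framing remark: the paper does not actually prove this theorem --- it is quoted from \cite{manecke_master_thesis}, and only the constructive machinery for the sufficiency direction (and only for the row $(s,r)=(3,4)$) is developed here, in Sections~\ref{sec:construction} and~\ref{sec:3:4}. Measured against that machinery, your sufficiency argument takes a genuinely different route --- local surgery at far-apart spots instead of global face replacement --- and it has a gap that the paper's formalism is specifically designed to avoid. In every relevant row the ``reservoir'' face supplied by Theorem~\ref{thm:eberhard:extended:3} or \ref{thm:eberhard:extended:4} (the hexagon for $r=3$, the quadrilateral for $r=4$) does \emph{not} lie in the support of $q$, so the final map may contain no reservoir faces beyond the $p_6$ (resp.\ $p_4$) already prescribed by $p$: essentially every reservoir face of the seed must be destroyed. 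That is incompatible with performing the surgery only in ``pairwise far-apart reservoir regions'' and certifying polyhedrality by the claim that any two dangerous faces lie inside a single copy of $P_l$ --- if all reservoir faces are to be consumed, the gadget blocks must tile the entire reservoir, they become adjacent to one another and to the faces of $p$, and the separation argument collapses. A second, independent problem is that Theorems~\ref{thm:eberhard:extended:3} and \ref{thm:eberhard:extended:4} give no control over \emph{where} the reservoir faces sit in the seed; ``excising a reservoir patch'' bounded by a clean cycle, or ``placing the excised patches deep inside large hexagonal blocks,'' presumes a structure the seed is not known to have. The paper's route sidesteps both issues: it subdivides every edge and replaces \emph{every} face of the seed --- each $k$-gon of $p$ by $\mathcal{M}(k)$ from Algorithm~\ref{alg:expansion:patch}, which preserves that $k$-gon as the centre of its patch, and each reservoir $4$-gon by the $o$-$4$-gonal patch $\mathcal{P}_F$ built only from $s$-gons and $l$-gons, which eliminates it --- and polyhedrality is certified once and for all by the local edge-patch check (the polyhedral property of $\mathcal{P}_P$ in Proposition~\ref{thm:main:const}), not by keeping insertions apart. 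To repair your argument you would either have to adopt this global-replacement scheme or supply a new reason why a seed with excisable, well-separated, exhaustively consumable reservoir blocks exists.

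On the necessity direction and on the $(s,r)=(3,5)$ row your proposal is only a pointer back to \cite{manecke_master_thesis}, so nothing is actually established there; in particular the cutoff $l<11$ for $s=r=3$ is not derived. Be cautious with the proposed mechanism: the holonomy argument of \cite{izmestiev2013there} accounts for the isolated torus exceptions in Theorems~\ref{thm:eberhard:extended:3} and \ref{thm:eberhard:extended:4}, whereas a cutoff in $l$ that kills \emph{all} sufficiently large $c$ is more plausibly a local incidence count (triangles in a $3$-valent polyhedral map must be flanked by large faces, and the supply of such flanking positions grows too slowly in $l$ compared with the $l-6$ triangles that $q$ forces per $l$-gon). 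As written, neither half of the equivalence is proved.
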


We close the section by noting that by duality this result gives also
a full classification for $w$ having two non-zero entries and $q$
having only one.

\section{Construction}\label{sec:construction}

Let $r \in \mathbb{N}$ be the valence of those vertices we are free to add to
a polyhedral map. All constructions later in this article will utilize
the concept of replacing each face of a polyhedral map with a larger
patch. It can be quite challenging to see whether the resulting
structures fit together. This section introduces the necessary
formalism for these kinds of constructions. All statements are
presented without proof, all proofs can be found in
\cite{manecke_master_thesis}. Note that throughout this
section we allow $2$-valent vertices in special maps we call patches.

\begin{definition}[Patch] A map $\mathcal{P}$ on the Euclidean plane with
  possibly $2$-valent vertices on the unbounded \textbf{outer face} is
  called a \textbf{patch}. The vertices and edges of the outer face form
  the \textbf{boundary} $\partial\mathcal{P}$ of the patch. A patch is an
  \textbf{$r$-patch}, if each of its vertices except the ones on the
  boundary is $r$-valent, while for the valence $\deg(v)$ of a vertex
  $v$ on the outer face $2 \leq \deg(v) \leq r$ holds. The
  \textbf{$p$-vector of a patch} is the sequence $(p_3, p_4, \dots)$,
  where $p_k$ denotes the number of $k$-gonal inner faces of the
  patch.
\end{definition}

We say that two $r$-patches $\mathcal{P}_1$ and $\mathcal{P}_2$ fit
together along a path $v_1 - \dots - v_n$ on $\partial\mathcal{P}_1$
and $u_1 - \dots - u_n$ on $\partial\mathcal{P}_2$, if, after gluing
them together such that $v_i = u_{n+1-i}$ the resulting patch is still an
$r$-patch. Define $w(v) \defeq \deg(v) - 1$. Then the condition for fitting
together is just $w(v_i) + w(u_{n+1-i}) = r$ for all
$i \in \{2, \dots, n-1\}$ and $w(v_i) + w(u_{n+1-i}) \leq r$, $w(v_{n+1-i}) + w(u_i) \leq r$. We say a tuple $(w_1, \dots, w_n)$ is
\textbf{self-fitting}, if $w_i + w_{n+1-i} = r$ for all
$i \in \{1, \dots, n\}$.

Essential for our constructions will be $w$-expansions. We will use
them, when we replace all $k$-gons in a polyhedral map with larger structures.

\begin{definition}
  Let $w = (w_1, \dots, w_n) \in \mathbb{N}^n$. A
  \textbf{$w$-expansion} of an $r$-patch $\mathcal{P}$ with boundary
  $\partial\mathcal{P} = v_1 - v_2 - \dots - \dots - v_m - v_1$ is
  an $r$-patch $\mathcal{P'}$ with boundary
\begin{multline*}
  \partial\mathcal{P'} = {v'}_1 - \underparen{{v'}_1^{(1)} - \dots -
    {v'}_n^{(1)}} - {v'}_2 - \underparen{{v'}_1^{(2)} - \dots -
    {v'}_n^{(2)}} - \quad\dots \\ - {v'}_m - \underparen{{v'}_1^{(m)}
    - \dots - {v'}_n^{(m)}} - {v'}_1,
\end{multline*}
such that $w(v_i) = w(v'_i)$ and $w({v'}_j^{(i)}) = w_j$ for all
$i \in \{1, \dots, m\}$, $j \in \{1, \dots, n\}$. We call
the vertices $v'_i$ \textbf{corner vertices} and the vertices
${v'}_j^{(i)}$ \textbf{side vertices}. Furthermore, a patch is called
\textbf{$w$-$k$-gonal}, if it is the $w$-expansion of the patch
consisting of only a $k$-gon, i.e. if $w(v'_i) = 1$ for
$i \in \{1, \dots, k\}$.
\end{definition}

Using this notation we describe the following construction scheme:

\begin{algorithm}\label{alg:map}

  \noindent\textbf{Input:}
  A map on a surface $S$ with $p$-vector $p$, $v$-vector $v$,
  underlying graph $G = (V, E)$ and faces $F$.

  A self-fitting tuple $w = (w_1, \dots, w_n)$.

  For each $k$-gonal face $f \in F$ a $w$-$k$-gonal $r$-patch
  $\mathcal{P}(f)$ with $p$-vector $p^{(f)}$.

  \noindent\textbf{Output:}  
  A map on $S$ with $v$-vector $v + d \cdot [r]$ for some
  $d \in \mathbb{N}$ and $p$-vector $\sum_{f \in F} p^{(f)}$.

  \noindent\textbf{Description:}
  Divide each edge $e \in E$ in the embedding of $G$ in $S$ by $n$
  vertices and draw into each face $f$ the dedicated $r$-patch
  $\mathcal{P}(f)$ such that the corner vertices of $\mathcal{P}(f)$
  coincide with the original vertices $V$ and the side vertices are
  the new vertices added by the subdivision, see
  Fig.~\ref{fig:alg:map:overview}. Here we use the fact, that our
  surfaces are oriented and assume that all patches are glued with the
  same orientation. These patches form a combined graph, which is
  embedded by construction into $S$ (there is a homeomorphism between
  each subdivided face $f \in F$ and the corresponding patch
  $\mathcal{P}(f)$). It is straightforward to see, that this gives a
  map with the desired properties.
\end{algorithm}

\begin{figure}[!h]  
  \centering
  \begin{tikzpicture}[line cap=round, line join=round]
    \matrix (m) [column sep=1cm] {
      \begin{scope}[scale=0.4]
        \draw (-3,0)--(-3,3)--(-6,5)--(-6,8)--(-3,10)--(0,8)--(6,8)--(6,5)--(7,1)--(3,0)--(-3,0);
        \draw (-3,3)--(0,5)--(0,8);
        \draw (3,0)--(3,3)--(0,5);
        \draw (3,3)--(7,1);
        \draw (3,3)--(6,5);

        \draw (-3,0)-- (-3.5,-0.666);
        \draw (-3,3)-- (-3.5,2.5);
        \draw (-6,5)--  (-6.5,4.666);
        \draw (-6,8)-- (-6.5,8.333);
        \draw (-3,10)-- (-3,10.5);
        \draw (0,8)-- (0.5,8.333);
        \draw (6,8)-- (6.5,8.5);
        \draw (6,5)-- (6.5,5);
        \draw (7,1)-- (7.5,0.5);
        \draw (3,0)-- (3.5,-0.666);

        \fill[black] (-3,0) circle(3pt);
        \fill[black] (-3,3)circle(3pt);
        \fill[black] (-6,5)circle(3pt);
        \fill[black] (-6,8)circle(3pt);
        \fill[black] (-3,10)circle(3pt);
        \fill[black] (0,8)circle(3pt);
        \fill[black] (6,8)circle(3pt);
        \fill[black] (6,5)circle(3pt);
        \fill[black] (7,1) circle(3pt);
        \fill[black] (3,0)circle(3pt);
        \fill[black] (0,5)circle(3pt);
        \fill[black] (3,3)circle(3pt);

        \node at (-3,6.5) {$f_1$};
        \node at (0,2.5) {$f_2$};
        \node at (3,5.5) {$f_3$};
        \node at (4.5,1.5) {$f_4$};
        \node at (5,3) {$f_5$};
        
      \end{scope}
      &
      \begin{scope}[scale=0.4]
        \draw (-3,0)--(-3,3)--(-6,5)--(-6,8)--(-3,10)--(0,8)--(6,8)--(6,5)--(7,1)--(3,0)--(-3,0);
        \draw (-3,3)--(0,5)--(0,8);
        \draw (3,0)--(3,3)--(0,5);
        \draw (3,3)--(7,1);
        \draw (3,3)--(6,5);

        \draw (-3,0)-- (-3.5,-0.666);
        \draw (-3,3)-- (-3.5,2.5);
        \draw (-6,5)--  (-6.5,4.666);
        \draw (-6,8)-- (-6.5,8.333);
        \draw (-3,10)-- (-3,10.5);
        \draw (0,8)-- (0.5,8.333);
        \draw (6,8)-- (6.5,8.5);
        \draw (6,5)-- (6.5,5);
        \draw (7,1)-- (7.5,0.5);
        \draw (3,0)-- (3.5,-0.666);

        \fill[black] (-3,0) circle(3pt);
        \fill[black] (-3,3)circle(3pt);
        \fill[black] (-6,5)circle(3pt);
        \fill[black] (-6,8)circle(3pt);
        \fill[black] (-3,10)circle(3pt);
        \fill[black] (0,8)circle(3pt);
        \fill[black] (6,8)circle(3pt);
        \fill[black] (6,5)circle(3pt);
        \fill[black] (7,1) circle(3pt);
        \fill[black] (3,0)circle(3pt);
        \fill[black] (0,5)circle(3pt);
        \fill[black] (3,3)circle(3pt);

        \node at (-3,6.5) {$\mathcal{P}(f_1)$};
        \node at (0,2.5) {$\mathcal{P}(f_2)$};
        \node at (3,5.5) {$\mathcal{P}(f_3)$};
        \node at (4.3,1.2) {$\mathcal{P}(f_4)$};
        \node at (5,3) {$\mathcal{P}(f_5)$};
        
        \foreach \x in {0.2,0.4,0.6,0.8}
        \fill[black] ($(3,0)!\x!(-3,0)$) circle (3pt);    
        \foreach \x in {0.2,0.4,0.6,0.8}
        \fill[black] ($(-3,0)!\x!(-3,3)$) circle (3pt); 
        \foreach \x in {0.2,0.4,0.6,0.8}
        \fill[black] ($(-3,3)!\x!(-6,5)$) circle (3pt);
        \foreach \x in {0.2,0.4,0.6,0.8}
        \fill[black] ($(-6,5)!\x!(-6,8)$) circle (3pt);
        \foreach \x in {0.2,0.4,0.6,0.8}
        \fill[black] ($(-6,8)!\x!(-3,10)$) circle (3pt);
        \foreach \x in {0.2,0.4,0.6,0.8}
        \fill[black] ($(-3,10)!\x!(0,8)$) circle (3pt);
        \foreach \x in {0.2,0.4,0.6,0.8}
        \fill[black] ($(0,8)!\x!(6,8)$) circle (3pt);
        \foreach \x in {0.2,0.4,0.6,0.8}
        \fill[black] ($(6,8)!\x!(6,5)$) circle (3pt);
        \foreach \x in {0.2,0.4,0.6,0.8}
        \fill[black] ($(6,5)!\x!(7,1)$) circle (3pt);     
        \foreach \x in {0.2,0.4,0.6,0.8}
        \fill[black] ($(7,1)!\x!(3,0)$) circle (3pt); 
        \foreach \x in {0.2,0.4,0.6,0.8}
        \fill[black] ($(3,0)!\x!(3,3)$) circle (3pt);
        \foreach \x in {0.2,0.4,0.6,0.8}
        \fill[black] ($(3,3)!\x!(7,1)$) circle (3pt);
        \foreach \x in {0.2,0.4,0.6,0.8}
        \fill[black] ($(3,3)!\x!(6,5)$) circle (3pt);
        \foreach \x in {0.2,0.4,0.6,0.8}
        \fill[black] ($(3,3)!\x!(0,5)$) circle (3pt);
        \foreach \x in {0.2,0.4,0.6,0.8}
        \fill[black] ($(0,5)!\x!(0,8)$) circle (3pt);
        \foreach \x in {0.2,0.4,0.6,0.8}
        \fill[black] ($(0,5)!\x!(-3,3)$) circle (3pt);      
      \end{scope}
      \\
    };
  \end{tikzpicture}
  \caption{}
  \label{fig:alg:map:overview}
\end{figure}

As previously stated, these definitions are used to formalize the
construction step ``replace each face with a patch''. Up until now,
there is no requirement explicitly stated on the interior of the
patch. If we expect the result of such a construction to be a
polyhedral map, further conditions have to be met. Additionally, when
using Algorithm~\ref{alg:map} we have the problem of assigning a patch for
each face of the map. While we might need only one type of patch for a
$k$-gon for each $k \geq 3$, we could still have to deal with a huge
amount of values of $k$. We now want to define a construction scheme
for patches for arbitrary $k$ which additionally allow to create
polyhedral maps, even from non-polyhedral ones.

\begin{definition}\label{def:expansion:patch} Let $\mathcal{P}$ be an $r$-patch with boundary
\begin{multline*}
  \partial\mathcal{P} = i_0 - i_1 - \dots - i_{m-1} - (i_m = o_0) - \dots - o_s - \dots - o_{n - 1} - (o_n = i'_m) \\
  - i'_{m-1} - \dots - i'_1 - i'_0 - i_0
\end{multline*}
($i_m$ and $o_0$ denote the same vertex, the same holds for $o_n$ and
$i'_m$), $1 \leq s < n$, $m > 0$, such that:
  \begin{itemize}
  \item $w(i_0) + w(i'_0) = r - 1$,
  \item $\mathcal{P}$ fits to itself along $i_1 - \dots - i_{m-1}$ and
    $i'_{m-1} - \dots - i'_1$,
  \item $w(o_s) = 1$, and
  \item
    $(w(o_{s + 1}), \dots, w(o_{n - 1}), w(o_n) + w(o_0), w(o_1),
    \dots, w(o_{s - 1}))$ is a self-fitting tuple.
  \end{itemize}
  Such a patch will be called \textbf{expansion patch with outer tuple
    $(w(o_{s + 1}), \dots, \allowbreak w(o_{n - 1}), w(o_n) + w(o_0), w(o_1),
    \dots, w(o_{s - 1}))$}.
\end{definition}

\begin{example}\label{ex:easy:expansion1}
  We want to review the last definition with two examples. A hexagon
  can be interpreted as an expansion $3$-patch $\mathcal{H}$ with
  outer tuple
  $(w_{\mathcal{H}}(o_3) + w_{\mathcal{H}}(o_0), \allowbreak w_{\mathcal{H}}(o_1))
  = (2, 1)$, with vertices labeled according to
  Definition~\ref{def:expansion:patch} in
  Fig.~\ref{fig:easy:expansion1}. Similarly two quadrangles which share
  a common edge build an expansion $4$-patch $\mathcal{Q}_2$ with
  outer tuple
  $(w_{\mathcal{Q}_2}(o_3) + w_{\mathcal{Q}_2}(o_0),
  w_{\mathcal{Q}_2}(o_1)) = (2, 2)$, as seen in the same figure.

    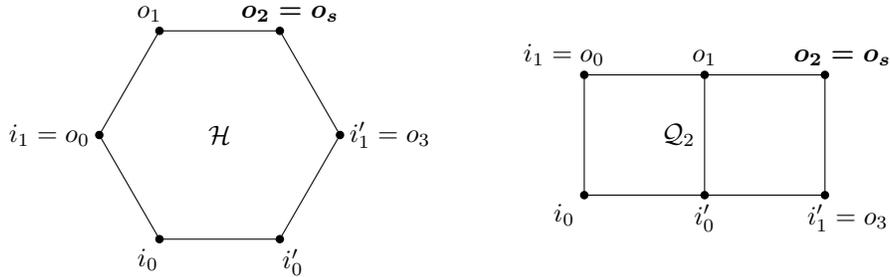
\begin{figure}[!h]  
      \centering
      \begin{tikzpicture}[line cap=round, line join=round]
    \matrix (m) [column sep=1cm] {
      \begin{scope}[scale=0.8]
        \draw (0 : 2) -- (60 : 2) -- (120 : 2) -- (180 : 2) -- (240 : 2) -- (300 : 2) -- (0 : 2);
        \fill [black] (  0 : 2) circle (2pt);
        \node[anchor=180] at (  0 : 2) {$i'_1 = o_3$};
        \fill [black] ( 60 : 2) circle (2pt);
        \node[anchor=240] at ( 60 : 2) {$\boldsymbol{o_2 = o_s}$};
        \fill [black] (120 : 2) circle (2pt);
        \node[anchor=300] at (120 : 2) {$o_1$};
        \fill [black] (180 : 2) circle (2pt);
        \node[anchor=  0] at (180 : 2) {$i_1 = o_0$};
        \fill [black] (240 : 2) circle (2pt);
        \node[anchor= 60] at (240 : 2) {$i_0$};
        \fill [black] (300 : 2) circle (2pt);
        \node[anchor=120] at (300 : 2) {$i'_0$};
        \node             at (  0 : 0) {$\mathcal{H}$};
      \end{scope}
      &
      \begin{scope}[scale=0.8]
        \draw (0, 1) -- (0, -1) -- (-2, -1) -- (-2, 1) -- (0, 1) -- (2, 1) -- (2, -1) -- (0, -1);
        \fill [black] (-2, -1) circle (2pt);
        \node[anchor= 45] at (-2, -1) {$i_0$};
        \fill [black] ( 0, -1) circle (2pt);
        \node[anchor= 90] at ( 0, -1) {$i'_0$};
        \fill [black] ( 2, -1) circle (2pt);
        \node[anchor=135] at ( 2, -1) {$i'_1 = o_3$};
        \fill [black] ( 2,  1) circle (2pt);
        \node[anchor=225] at ( 2,  1) {$\boldsymbol{o_2 = o_s}$};
        \fill [black] ( 0,  1) circle (2pt);
        \node[anchor=270] at ( 0,  1) {$o_1$};
        \fill [black] (-2,  1) circle (2pt);
        \node[anchor=315] at (-2,  1) {$i_1 = o_0$};
        \node[anchor=  0] at ( 0,  0) {$\mathcal{Q}_2$};
      \end{scope}
      \\
    };
  \end{tikzpicture}%
  \caption{Two expansion patches}
  \label{fig:easy:expansion1}
  \end{figure}
\end{example}

We want to pull apart this definition a bit to give a geometric
intuition. The first thing to note is that by definition we are able
to glue two copies of an expansion patch along the paths
$i_0 - \dots - i_{m}$ and $i'_{m} - \dots - i'_0$. When doing this the
new patch has a boundary path
$o_{s+1} - \dots - o_{n-1} - (o_n = o_0) - o_1 - \dots - o_{s-1}$,
which we require to be self-fitting. Therefore we can glue two of
those patches along this boundary to get an even larger patch (see
Fig.~\ref{fig:edge:patch}).
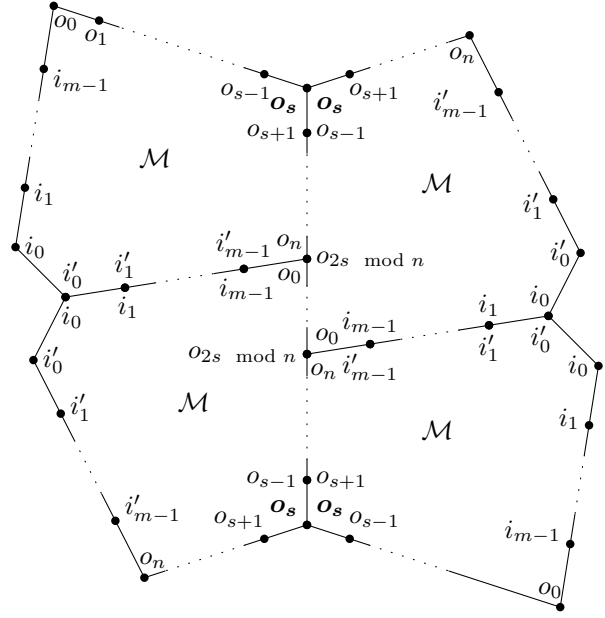
\begin{figure}[!h]  
  \centering
  \begin{tikzpicture}[line cap=round, line join=round]
    \begin{scope}[scale=0.8]
      \draw[shift={(-5,0)}] (9 : 1) -- (81 : 1)  (297 : 1) -- (9 : 1);

      \draw[shift={(-5,0)}] (9 : 1) -- (9 : 2.5);
      \draw[shift={(-5,0)}][loosely dotted] (9 : 2.5) -- (9 : 3.5);
      \draw[shift={(-5,0)}] (9 : 3.5) -- (9 : 5.062);
      \draw[shift={(-5,0)}] (81 : 1) -- (81 : 2.5);
      \draw[shift={(-5,0)}][loosely dotted] (81 : 2.5) -- (81 : 3.5);
      \draw[shift={(-5,0)}] (81 : 3.5) -- (81 : 5.062);
      \draw[shift={(-5,0)}] (297 : 1) -- (297 : 2.5);
      \draw[shift={(-5,0)}][loosely dotted] (297 : 2.5) -- (297 : 3.5);
      \draw[shift={(-5,0)}] (297 : 3.5) -- (297 : 5.062);
      \draw[shift={(-5,0)}][loosely dotted] (4 : 5.012) -- (356 : 5.012);

      \draw[shift={(-5,0)}] (-13 : 5.1315) -- (356 : 5.012) (4 : 5.012) -- (13 : 5.1315);
      \draw[shift={(-5,0)}][loosely dotted] (13 : 5.1315) -- (27 : 5.612);
      \draw[shift={(-5,0)}] (27 : 5.612) -- (36 : 6.180) -- (45 : 5.612);
      \draw[shift={(-5,0)}][loosely dotted] (45 : 5.612) -- (68 : 5.012);
      \draw[shift={(-5,0)}] (68 : 5.012) -- (81 : 5.062);
      \draw[shift={(-5,0)}] (-59 : 5.1315) -- (297 : 5.062);
      \draw[shift={(-5,0)}][loosely dotted] (-45 : 5.612) -- (-59 : 5.1315);
      \draw[shift={(-5,0)}] (-27 : 5.612) -- (-36 : 6.180) -- (-45 : 5.612);
      \draw[shift={(-5,0)}][loosely dotted] (-27 : 5.612) -- (-13 : 5.1315);

      \fill[shift={(-5,0)}] [black] (9 : 1) circle (2pt);
      \node[shift={(-4,0)}][anchor="108"] at (9 : 1) {$i_0$};
      \fill[shift={(-5,0)}] [black] (9 : 2) circle (2pt);
      \node[shift={(-4,0)}][anchor="99"] at (9 : 2) {$i_1$};
      \fill[shift={(-5,0)}] [black] (9 : 4) circle (2pt);
      \node[shift={(-4,0)}][anchor="99"] at (9 : 4) {$i_{m-1}$};
      \fill[shift={(-5,0)}] [black] (9 : 5.062) circle (2pt);
      \node[shift={(-4,0)}][anchor="45"] at (9 : 5.062) {$o_{0}$};
      \fill[shift={(-5,0)}] [black] (-9 : 5.062) circle (2pt);
      \node[shift={(-4,0)}][anchor="0"] at (-9 : 5.062) {$o_{2s \mod n}$};
      \fill[shift={(-5,0)}] [black] (-30 : 5.774) circle (2pt);
      \node[shift={(-4,0)}][anchor="0"] at (-30 : 5.774) {$o_{s - 1}$};
      \fill[shift={(-5,0)}] [black] (-36 : 6.180) circle (2pt);
      \node[shift={(-4,0)}][anchor="-36"] at (-36 : 6.180) {$\boldsymbol{o_s}$};
      \fill[shift={(-5,0)}] [black] (-42 : 5.774) circle (2pt);
      \node[shift={(-4,0)}][anchor="-36"] at (-42 : 5.774) {$o_{s + 1}$};
      \fill[shift={(-5,0)}] [black] (-63 : 5.062) circle (2pt);
      \node[shift={(-4,0)}][anchor="-117"] at (-63 : 5.062) {$o_{n}$};
      \fill[shift={(-5,0)}] [black] (-63 : 4) circle (2pt);
      \node[shift={(-4,0)}][anchor="198"] at (-63 : 4) {$i'_{m-1}$};
      \fill[shift={(-5,0)}] [black] (-63 : 2) circle (2pt);
      \node[shift={(-4,0)}][anchor="198"] at (-63 : 2) {$i'_{1}$};
      \fill[shift={(-5,0)}] [black] (-63 : 1) circle (2pt);
      \node[shift={(-4,0)}][anchor="180"] at (-63 : 1) {$i'_0$};
      \node[shift={(-4,0)}] at (-27 : 3.5) {$\mathcal{M}$};

      \fill[shift={(-5,0)}] [black] (81 : 1) circle (2pt);
      \node[shift={(-4,0)}][anchor="180"] at (81 : 1) {$i_0$};
      \fill[shift={(-5,0)}] [black] (81 : 2) circle (2pt);
      \node[shift={(-4,0)}][anchor="162"] at (81 : 2) {$i_1$};
      \fill[shift={(-5,0)}] [black] (81 : 4) circle (2pt);
      \node[shift={(-4,0)}][anchor="162"] at (81 : 4) {$i_{m-1}$};
      \fill[shift={(-5,0)}] [black] (81 : 5.062) circle (2pt);
      \node[shift={(-4,0)}][anchor="126"] at (81 : 5.062) {$o_{0}$};
      \fill[shift={(-5,0)}] [black] (72 : 5) circle (2pt);
      \node[shift={(-4,0)}][anchor="82"] at (72 : 5) {$o_{1}$};
      \fill[shift={(-5,0)}] [black] (42 : 5.774) circle (2pt);
      \node[shift={(-4,0)}][anchor="45"] at (42 : 5.774) {$o_{s - 1}$};
      \fill[shift={(-5,0)}] [black] (36 : 6.180) circle (2pt);
      \node[shift={(-4,0)}][anchor="36"] at (36 : 6.180) {$\boldsymbol{o_s}$};
      \fill[shift={(-5,0)}] [black] (30 : 5.774) circle (2pt);
      \node[shift={(-4,0)}][anchor="0"] at (30 : 5.774) {$o_{s + 1}$};
      \fill[shift={(-5,0)}] [black] (9 : 5.062) circle (2pt);
      \node[shift={(-4,0)}][anchor="-45"] at (9 : 5.062) {$o_{n}$};
      \node[shift={(-4,0)}][anchor="180"] at (9 : 5.062) {$o_{2s \mod n}$};
      
      \fill[shift={(-5,0)}] [black] (9 : 4) circle (2pt);
      \node[shift={(-4,0)}][anchor="270"] at (9 : 4) {$i'_{m-1}$};
      \fill[shift={(-5,0)}] [black] (9 : 2) circle (2pt);
      \node[shift={(-4,0)}][anchor="270"] at (9 : 2) {$i'_{1}$};
      \fill[shift={(-5,0)}] [black] (9 : 1) circle (2pt);
      \node[shift={(-4,0)}][anchor="252"] at (9 : 1) {$i'_0$};
      \node[shift={(-4,0)}] at (45 : 3.5) {$\mathcal{M}$};

      \draw[shift={(5,0)}, rotate around={180:(0,0)}] (9 : 1) -- (81 : 1) (297 : 1) -- (9 : 1);

      \draw[shift={(5,0)}, rotate around={180:(0,0)}] (9 : 1) -- (9 : 2.5);
      \draw[shift={(5,0)}, rotate around={180:(0,0)}][loosely dotted] (9 : 2.5) -- (9 : 3.5);
      \draw[shift={(5,0)}, rotate around={180:(0,0)}] (9 : 3.5) -- (9 : 5.062);
      \draw[shift={(5,0)}, rotate around={180:(0,0)}] (81 : 1) -- (81 : 2.5);
      \draw[shift={(5,0)}, rotate around={180:(0,0)}][loosely dotted] (81 : 2.5) -- (81 : 3.5);
      \draw[shift={(5,0)}, rotate around={180:(0,0)}] (81 : 3.5) -- (81 : 5.062);
      \draw[shift={(5,0)}, rotate around={180:(0,0)}] (297 : 1) -- (297 : 2.5);
      \draw[shift={(5,0)}, rotate around={180:(0,0)}][loosely dotted] (297 : 2.5) -- (297 : 3.5);
      \draw[shift={(5,0)}, rotate around={180:(0,0)}] (297 : 3.5) -- (297 : 5.062);
      
      \draw[shift={(5,0)}, rotate around={180:(0,0)}] (36 : 6.180) -- (45 : 5.612);
      \draw[shift={(5,0)}, rotate around={180:(0,0)}][loosely dotted] (45 : 5.612) -- (59 : 5.1315);
      \draw[shift={(5,0)}, rotate around={180:(0,0)}] (59 : 5.1315) -- (81 : 5.062);
      \draw[shift={(5,0)}, rotate around={180:(0,0)}] (-59 : 5.1315) -- (297 : 5.062);
      \draw[shift={(5,0)}, rotate around={180:(0,0)}][loosely dotted] (-45 : 5.612) -- (-59 : 5.1315);
      \draw[shift={(5,0)}, rotate around={180:(0,0)}] (-36 : 6.180) -- (-45 : 5.612);
      
      \fill[shift={(5,0)}, rotate around={180:(0,0)}] [black] (9 : 1) circle (2pt);
      \node[shift={(4,0)}][anchor="292"] at (189 : 1) {$i_0$};
      \fill[shift={(5,0)}, rotate around={180:(0,0)}] [black] (9 : 2) circle (2pt);
      \node[shift={(4,0)}][anchor="270"] at (189 : 2) {$i_1$};
      \fill[shift={(5,0)}, rotate around={180:(0,0)}] [black] (9 : 4) circle (2pt);
      \node[shift={(4,0)}][anchor="270"] at (189 : 4) {$i_{m-1}$};
      \fill[shift={(5,0)}, rotate around={180:(0,0)}] [black] (9 : 5.062) circle (2pt);
      \node[shift={(4,0)}][anchor="225"] at (189 : 5) {$o_{0}$};
      \fill[shift={(5,0)}, rotate around={180:(0,0)}] [black] (-30 : 5.774) circle (2pt);
      \node[shift={(4,0)}][anchor="180"] at (150 : 5.774) {$o_{s - 1}$};
      \fill[shift={(5,0)}, rotate around={180:(0,0)}] [black] (-36 : 6.180) circle (2pt);
      \node[shift={(4,0)}][anchor="144"] at (144 : 6.180) {$\boldsymbol{o_s}$};
      \fill[shift={(5,0)}, rotate around={180:(0,0)}] [black] (-42 : 5.774) circle (2pt);
      \node[shift={(4,0)}][anchor="144"] at (138 : 5.774) {$o_{s + 1}$};
      \fill[shift={(5,0)}, rotate around={180:(0,0)}] [black] (-63 : 5.062) circle (2pt);
      \node[shift={(4,0)}][anchor="63"] at (117 : 5) {$o_{n}$};
      \fill[shift={(5,0)}, rotate around={180:(0,0)}] [black] (-63 : 4) circle (2pt);
      \node[shift={(4,0)}][anchor="18"] at (117 : 4) {$i'_{m-1}$};
      \fill[shift={(5,0)}, rotate around={180:(0,0)}] [black] (-63 : 2) circle (2pt);
      \node[shift={(4,0)}][anchor="18"] at (117 : 2) {$i'_{1}$};
      \fill[shift={(5,0)}, rotate around={180:(0,0)}] [black] (-63 : 1) circle (2pt);
      \node[shift={(4,0)}][anchor="0"] at (117 : 1) {$i'_0$};
      \node[shift={(4,0)}] at (144 : 3.5) {$\mathcal{M}$};

      \fill[shift={(5,0)}, rotate around={180:(0,0)}] [black] (81 : 1) circle (2pt);
      \node[shift={(4,0)}][anchor="0"] at (261 : 1) {$i_0$};
      \fill[shift={(5,0)}, rotate around={180:(0,0)}] [black] (81 : 2) circle (2pt);
      \node[shift={(4,0)}][anchor="342"] at (261 : 2) {$i_1$};
      \fill[shift={(5,0)}, rotate around={180:(0,0)}] [black] (81 : 4) circle (2pt);
      \node[shift={(4,0)}][anchor="342"] at (261 : 4) {$i_{m-1}$};
      \fill[shift={(5,0)}, rotate around={180:(0,0)}] [black] (81 : 5.062) circle (2pt);
      \node[shift={(4,0)}][anchor="300"] at (261 : 5.062) {$o_{0}$};
      \fill[shift={(5,0)}, rotate around={180:(0,0)}] [black] (42 : 5.774) circle (2pt);
      \node[shift={(4,0)}][anchor="216"] at (222 : 5.774) {$o_{s - 1}$};
      \fill[shift={(5,0)}, rotate around={180:(0,0)}] [black] (36 : 6.180) circle (2pt);
      \node[shift={(4,0)}][anchor="216"] at (216 : 6.180) {$\boldsymbol{o_s}$};
      \fill[shift={(5,0)}, rotate around={180:(0,0)}] [black] (30 : 5.774) circle (2pt);
      \node[shift={(4,0)}][anchor="180"] at (210 : 5.774) {$o_{s + 1}$};
      \fill[shift={(5,0)}, rotate around={180:(0,0)}] [black] (9 : 5.062) circle (2pt);
      \node[shift={(4,0)}][anchor="135"] at (189 : 5.062) {$o_{n}$};
      \fill[shift={(5,0)}, rotate around={180:(0,0)}] [black] (9 : 4) circle (2pt);
      \node[shift={(4,0)}][anchor="90"] at (189 : 4) {$i'_{m-1}$};
      \fill[shift={(5,0)}, rotate around={180:(0,0)}] [black] (9 : 2) circle (2pt);
      \node[shift={(4,0)}][anchor="90"] at (189 : 2) {$i'_{1}$};
      \fill[shift={(5,0)}, rotate around={180:(0,0)}] [black] (9 : 1) circle (2pt);
      \node[shift={(4,0)}][anchor="72"] at (189 : 1) {$i'_0$};
      \node[shift={(4,0)}] at (216 : 3.5) {$\mathcal{M}$};
    \end{scope}
  \end{tikzpicture}
  \caption{An edge patch}
  \label{fig:edge:patch}
\end{figure}

For an expansion patch $\mathcal{M}$, we want to call the patch obtained
by gluing four copies of $\mathcal{M}$ as stated the \textbf{edge patch}
of $\mathcal{M}$. An expansion patch will be said to have the
\textbf{polyhedral property} if every two inner faces in the
corresponding edge patch meet properly.

\begin{example}\label{ex:easy:expansion2}
  The examples in Example~\ref{ex:easy:expansion1} do in fact have the
  polyhedral property, which can be verified by looking at the edge
  patch in Fig.~\ref{fig:easy:expansion2}.
  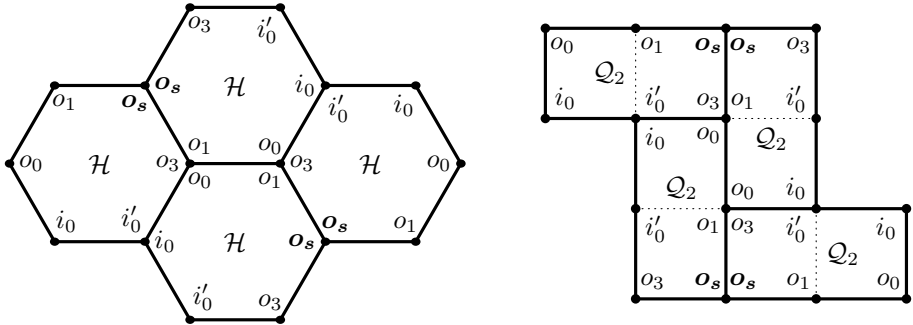
\begin{figure}[!h]  
    \centering
    \begin{tikzpicture}[line cap=round, line join=round]
  
    \matrix (m) [column sep=1cm] {
      \begin{scope}[xscale=1.0, yscale=0.866, scale=0.6]
        \draw[very thick] (-1,  0) -- (-2, -2) -- (-4, -2) -- (-5,  0) -- (-4,  2) -- (-2,  2) -- (-1,  0);
        \draw[very thick] (-1,  0) -- ( 1,  0) -- ( 2, -2) -- ( 1, -4) -- (-1, -4) -- (-2, -2);
        \draw[very thick] ( 1,  0) -- ( 2,  2) -- ( 1,  4) -- (-1,  4) -- (-2,  2);
        \draw[very thick] ( 2, -2) -- ( 4, -2) -- ( 5,  0) -- ( 4,  2) -- ( 2,  2);

        \fill[black] (-4,-2) circle(3pt);
        \fill[black] ( 4, 2) circle(3pt);
        \fill[black] (-4, 2) circle(3pt);
        \fill[black] (-2,-2) circle(3pt);
        \fill[black] (-2, 2) circle(3pt);
        \fill[black] ( 2,-2) circle(3pt);
        \fill[black] ( 5, 0) circle(3pt);
        \fill[black] (-5, 0) circle(3pt);
        \fill[black] (-1, 0) circle(3pt);
        \fill[black] ( 1, 0) circle(3pt);
        \fill[black] (-1,-4) circle(3pt);
        \fill[black] ( 1,-4) circle(3pt);
        \fill[black] (-1, 4) circle(3pt);
        \fill[black] ( 2, 2) circle(3pt);
        \fill[black] ( 4,-2) circle(3pt);
        \fill[black] ( 1, 4) circle(3pt);

        \node[anchor=240] at (-4, -2) {$i_0$};
        \node[anchor=180] at (-5,  0) {$o_0$};
        \node[anchor=120] at (-4,  2) {$o_1$};
        \node[anchor= 60] at (-2,  2) {$\boldsymbol{o_s}$};
        \node[anchor=  0] at (-1,  0) {$o_3$};
        \node[anchor=300] at (-2, -2) {$i'_0$};

        \node[anchor=240] at (-1, -4) {$i'_0$};
        \node[anchor=180] at (-2, -2) {$i_0$};
        \node[anchor=120] at (-1,  0) {$o_0$};
        \node[anchor= 60] at ( 1,  0) {$o_1$};
        \node[anchor=  0] at ( 2, -2) {$\boldsymbol{o_s}$};
        \node[anchor=300] at ( 1, -4) {$o_3$};

        \node[anchor=240] at (-1,  0) {$o_1$};
        \node[anchor=180] at (-2,  2) {$\boldsymbol{o_s}$};
        \node[anchor=120] at (-1,  4) {$o_3$};
        \node[anchor= 60] at ( 1,  4) {$i'_0$};
        \node[anchor=  0] at ( 2,  2) {$i_0$};
        \node[anchor=300] at ( 1,  0) {$o_0$};

        \node[anchor=240] at ( 2, -2) {$\boldsymbol{o_s}$};
        \node[anchor=180] at ( 1,  0) {$o_3$};
        \node[anchor=120] at ( 2,  2) {$i'_0$};
        \node[anchor= 60] at ( 4,  2) {$i_0$};
        \node[anchor=  0] at ( 5,  0) {$o_0$};
        \node[anchor=300] at ( 4, -2) {$o_1$};

        \node             at (-3,  0) {$\mathcal{H}$};
        \node             at ( 0, -2) {$\mathcal{H}$};
        \node             at ( 0,  2) {$\mathcal{H}$};
        \node             at ( 3,  0) {$\mathcal{H}$};
      \end{scope}
      &
      \begin{scope}[scale=0.6]
        \draw[very thick] (-2, -3) -- (-2,  1) -- (-4,  1) -- (-4,  3) -- (2,  3) -- (2, -1) -- (4, -1) -- (4, -3) -- (-2, -3);
        \draw[very thick] ( 0, -3) -- ( 0,  3);
        \draw[very thick] ( 2, -1) -- ( 0, -1);
        \draw[very thick] ( 0,  1) -- (-2,  1);
        \draw[dotted] (-2, -1) -- ( 0, -1);
        \draw[dotted] ( 2,  1) -- ( 0,  1);
        \draw[dotted] (-2,  3) -- (-2,  1);
        \draw[dotted] ( 2, -3) -- ( 2, -1);

        \foreach \x in {-2,0,2}
        \foreach \y in {-3,-1,1,3}
        \fill[black] (\x, \y) circle(3pt);
        \fill[black] (4,-1) circle(3pt);
        \fill[black] (4,-3) circle(3pt);
        \fill[black] (-4, 1) circle(3pt);
        \fill[black] (-4, 3) circle(3pt);

        \node[anchor=225] at (-4,  1) {$i_0$};
        \node[anchor=135] at (-4,  3) {$o_0$};
        \node[anchor=135] at (-2,  3) {$o_1$};
        \node[anchor= 45] at ( 0,  3) {$\boldsymbol{o_s}$};
        \node[anchor=315] at ( 0,  1) {$o_3$};
        \node[anchor=225] at (-2,  1) {$i'_0$};
        \node[anchor=  0] at (-2,  2) {$\mathcal{Q}_2$};

        \node[anchor= 45] at ( 4, -1) {$i_0$};
        \node[anchor=315] at ( 4, -3) {$o_0$};
        \node[anchor=315] at ( 2, -3) {$o_1$};
        \node[anchor=225] at ( 0, -3) {$\boldsymbol{o_s}$};
        \node[anchor=135] at ( 0, -1) {$o_3$};
        \node[anchor= 45] at ( 2, -1) {$i'_0$};
        \node[anchor=180] at ( 2, -2) {$\mathcal{Q}_2$};

        \node[anchor=135] at (-2,  1) {$i_0$};
        \node[anchor= 45] at ( 0,  1) {$o_0$};
        \node[anchor= 45] at ( 0, -1) {$o_1$};
        \node[anchor=315] at ( 0, -3) {$\boldsymbol{o_s}$};
        \node[anchor=225] at (-2, -3) {$o_3$};
        \node[anchor=135] at (-2, -1) {$i'_0$};
        \node[anchor=270] at (-1, -1) {$\mathcal{Q}_2$};

        \node[anchor=315] at ( 2, -1) {$i_0$};
        \node[anchor=225] at ( 0, -1) {$o_0$};
        \node[anchor=225] at ( 0,  1) {$o_1$};
        \node[anchor=135] at ( 0,  3) {$\boldsymbol{o_s}$};
        \node[anchor= 45] at ( 2,  3) {$o_3$};
        \node[anchor=315] at ( 2,  1) {$i'_0$};
        \node[anchor= 90] at ( 1,  1) {$\mathcal{Q}_2$};
      \end{scope}
      \\
    };
  \end{tikzpicture}%
  \caption{Two edge patches}
  \label{fig:easy:expansion2}
  \end{figure}
\end{example}

Expansion patches will be our basic building block for all our constructive proofs. We can use them to obtain larger $o$-$k$-gonal patches for any $k \geq 3$:

\begin{algorithm}\label{alg:expansion:patch}
  
  \noindent\textbf{Input:}
  An expansion $r$-patch $\mathcal{M}$ with outer tuple $o$ and $p$-vector $p$.

  \noindent\textbf{Output:}
  For every $k \geq 3$ an $o$-$k$-gonal $r$-patch $\mathcal{M}(k)$ with $p$-vector $[k] + k \cdot p$.

  If $\mathcal{M}$ has the polyhedral property, then all inner faces of $\mathcal{M}(k)$ meet properly.

  \noindent\textbf{Description:}
  We construct $\mathcal{M}(k)$ from $k$ copies of $\mathcal{M}$ and a
  single $k$-gon. Let
\begin{multline*}
  \partial\mathcal{P} = i_0 - i_1 - \dots - (i_m = o_0) - o_1 - \dots - o_s - \dots - o_{n - 1} - (o_n = i'_m) - \dots \\ - i'_1 - i'_0 - i_0
\end{multline*}
be the boundary of $\mathcal{M}$ as in
Definition~\ref{def:expansion:patch}. We now form a larger patch by
gluing the edge $\{i_0, i'_0\}$ of each of the $k$ copies of
$\mathcal{M}$ to an edge of the $k$-gon and also gluing the vertex
associated to $i_l$, $1 \leq l \leq m$ from one copy to the vertex
associated to $i'_l$ from the adjacent copy. Graphically speaking, we
form a ring of $k$ patches of the form $\mathcal{M}$ around the
$k$-gon. The $p$-vector of $\mathcal{M}(k)$ is therefore
$[k] + k \cdot p$. We leave out the proof that the inner faces of
$\mathcal{M}(k)$ meet properly in the case of $\mathcal{M}$ being
polyhedral.

\end{algorithm}

With these constructions at hand we can now finally design a scheme to
create a polyhedral map from a non-polyhedral one.

\begin{proposition}\label{thm:alg:polymap}
  Given a map $M$ on an orientable closed $2$-manifold $S$ and an
  expansion $r$-patch $\mathcal{M}$ with outer tuple $o$,
  Algorithm~\ref{alg:map} returns a polyhedral map on $S$ when we take
  $\mathcal{P}(f) = \mathcal{M}(k)$ for each $k$-gonal face $f$ of
  $M$.
\end{proposition}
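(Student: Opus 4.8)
The proof falls into two parts. The first is to check that the input handed to Algorithm~\ref{alg:map} is legitimate, so that its output is at least a map. By Definition~\ref{def:expansion:patch} the outer tuple $o$ of $\mathcal{M}$ is self-fitting, and by Algorithm~\ref{alg:expansion:patch} each $\mathcal{M}(k)$ is an $o$-$k$-gonal $r$-patch; hence taking the self-fitting tuple to be $o$ and the patch assigned to a $k$-gonal face $f$ to be $\mathcal{P}(f)=\mathcal{M}(k)$ is an admissible input, and Algorithm~\ref{alg:map} returns a map $N$ on $S$. The second, substantial, part is to show that $N$ is polyhedral, i.e.\ that any two distinct faces of $N$ meet properly; for this I use throughout that $\mathcal{M}$ has the polyhedral property (which is implicitly needed: without it even a single $\mathcal{M}(k)$ may fail to be internally polyhedral, cf.\ Algorithm~\ref{alg:expansion:patch}).

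The key to the second part is to localise. Each inner face of $N$ is an inner face of exactly one patch $\mathcal{P}(f)$, since Algorithm~\ref{alg:map} draws $\mathcal{P}(f)$ homeomorphically into the subdivided closed face $\overline f$. Reading off the construction of $\mathcal{M}(k)$ from Algorithm~\ref{alg:expansion:patch} --- a central $k$-gon encircled by $k$ copies of $\mathcal{M}$, the $j$-th copy straddling the $j$-th corner --- one obtains the structural fact that an inner face of $\mathcal{M}(k)$ is incident to at most one corner vertex, and meets $\partial\mathcal{M}(k)$ only along the sides emanating from that corner. Translated to $N$: an inner face $g$ is incident to at most one vertex of $M$, and it reaches the subdivisions of the edges of $M$ only along the at most two edges of $M$ at that vertex. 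Consequently, for two distinct faces $g_1,g_2$ of $N$, the intersection $\overline{g_1}\cap\overline{g_2}$ (if non-empty) is contained in one of three regions: the interior of a single patch $\mathcal{P}(f)$; a neighbourhood $R_e$ in $N$ of a single edge $e$ of $M$; or $\{v\}$ for a single vertex $v$ of $M$.

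In the first case $g_1,g_2$ are distinct inner faces of a common patch $\mathcal{M}(k)$ and their whole intersection lies in its interior, so they meet properly by Algorithm~\ref{alg:expansion:patch}, and this is also their meeting in $N$. The third case requires nothing. The second case --- where $\overline{g_1}\cap\overline{g_2}$ straddles an edge $e$ of $M$ --- is the crux, and I expect it to be the main obstacle: one must verify that the neighbourhood $R_e$ of $e$ in $N$ is isomorphic to the edge patch of $\mathcal{M}$. The four copies of $\mathcal{M}$ that appear around $e$ are precisely those straddling the two ends of $e$ from each of its two sides; they are glued to one another along the two $i$-paths running from the centres of the two expanded faces out to subdivision points of $e$, and along the subdivision of $e$ itself. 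Matching these identifications and the vertex weights they produce against the edge-patch construction of Section~\ref{sec:construction} --- this is exactly where the ``fits to itself'' condition on $\mathcal{M}$, the self-fittingness of $o$, and the uniform-orientation assumption of Algorithm~\ref{alg:map} enter, and where possible degenerate incidences in $M$ (a face meeting an edge or a vertex more than once) must be absorbed into the same picture --- identifies $R_e$ with the edge patch of $\mathcal{M}$, whence $g_1$ and $g_2$ meet properly there by the polyhedral property of $\mathcal{M}$. Since the three cases are exhaustive, every two distinct faces of $N$ meet properly, so $N$ is polyhedral.
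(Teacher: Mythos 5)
First, a caveat: the paper does not actually prove this proposition --- Section~\ref{sec:construction} states that all of its results are given without proof and defers to \cite{manecke_master_thesis} --- so there is no in-paper argument to measure yours against. Your strategy is nonetheless clearly the intended one: the edge patch is defined precisely so that the polyhedral property certifies proper meeting in a neighbourhood of each edge of $M$, and your localisation (interior of one patch / neighbourhood of one edge of $M$ / one vertex of $M$) is the natural way to exploit it. Your observation that the polyhedral property of $\mathcal{M}$ is an unstated but necessary hypothesis is also correct and worth making explicit: the edge patch of $\mathcal{M}$ sits inside the output around any edge of $M$, so without that hypothesis the conclusion fails in general (compare the deliberate separation of $\mathcal{P}_N$ from $\mathcal{P}_P$ in Proposition~\ref{thm:main:const}).

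That said, there are two genuine gaps. The first is the one you flag yourself: essentially the whole content of the proposition is the verification that the four copies of $\mathcal{M}$ surrounding an edge $e$ of $M$ assemble into the edge patch of $\mathcal{M}$ --- that the two copies on one side of $e$ are glued along their $i$-paths exactly as in Definition~\ref{def:expansion:patch}, that the two resulting half-patches present the self-fitting outer tuple to $e$ with opposite orientations so every subdivision vertex of $e$ becomes $r$-valent, and that this survives degenerate incidences of $M$ (an edge with the same face on both sides, a face revisiting a vertex). You list the ingredients but do not carry out the matching, so the crux is asserted rather than proved. The second gap is that your trichotomy is not exhaustive as stated: $\overline{g_1}\cap\overline{g_2}$ can straddle the regions. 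Two faces of the same patch may meet both in its interior and on $\partial f$; two faces in different patches that both touch a vertex $u$ of $M$ may additionally share subdivision points of an edge at $u$. Both situations are in fact covered --- in the first, the guarantee of Algorithm~\ref{alg:expansion:patch} already applies to the entire intersection, and in the second, both faces lie in copies of $\mathcal{M}$ belonging to a single edge patch, which contains $u$ as the corner $o_s$ --- but one must reorganise the cases accordingly, and one must also argue (using that vertices of $M$ have valence at least $3$, so two edges cannot be cyclically consecutive at $u$ in both rotational orders) that two faces from different patches can never share points of two distinct edges of $M$, which is what makes a single edge patch suffice.
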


\begin{figure}[!h]
  \centering
  \begin{tikzpicture}[line cap=round, line join=round, scale=0.8]
    \matrix (m) [column sep=1cm] {
      \begin{scope}[scale=0.4]
        \draw (-3,0)--(-3,3)--(-6,5)--(-6,8)--(-3,10)--(0,8)--(6,8)--(6,5)--(7,1)--(3,0)--(-3,0);
        \draw (-3,3)--(0,5)--(0,8);
        \draw (3,0)--(3,3)--(0,5);
        \draw (3,3)--(7,1);
        \draw (3,3)--(6,5);

        \draw (-3,0)-- (-3.5,-0.666);
        \draw (-3,3)-- (-3.5,2.5);
        \draw (-6,5)--  (-6.5,4.666);
        \draw (-6,8)-- (-6.5,8.333);
        \draw (-3,10)-- (-3,10.5);
        \draw (0,8)-- (0.5,8.333);
        \draw (6,8)-- (6.5,8.5);
        \draw (6,5)-- (6.5,5);
        \draw (7,1)-- (7.5,0.5);
        \draw (3,0)-- (3.5,-0.666);

        \fill[black] (-3,0) circle(3pt);
        \fill[black] (-3,3)circle(3pt);
        \fill[black] (-6,5)circle(3pt);
        \fill[black] (-6,8)circle(3pt);
        \fill[black] (-3,10)circle(3pt);
        \fill[black] (0,8)circle(3pt);
        \fill[black] (6,8)circle(3pt);
        \fill[black] (6,5)circle(3pt);
        \fill[black] (7,1) circle(3pt);
        \fill[black] (3,0)circle(3pt);
        \fill[black] (0,5)circle(3pt);
        \fill[black] (3,3)circle(3pt);

        \node at (-3,6.5) {$f_1$};
        \node at (0,2.5) {$f_2$};
        \node at (3,5.5) {$f_3$};
        \node at (4.5,1.5) {$f_4$};
        \node at (5,3) {$f_5$};
        
      \end{scope}
      &
      \begin{scope}[scale=0.4]
        \draw (-3,0)--(-3,3)--(-6,5)--(-6,8)--(-3,10)--(0,8)--(6,8)--(6,5)--(7,1)--(3,0)--(-3,0);
        \draw (-3,3)--(0,5)--(0,8);
        \draw (3,0)--(3,3)--(0,5);
        \draw (3,3)--(7,1);
        \draw (3,3)--(6,5);

        \coordinate[shift={(0,0.8)}] (penta1) at ( 50:0.5) ;
        \coordinate[shift={(0,0.8)}] (penta2) at (122:0.5) ;
        \coordinate[shift={(0,0.8)}] (penta3) at (194:0.5) ;
        \coordinate[shift={(0,0.8)}] (penta4) at (266:0.5) ;
        \coordinate[shift={(0,0.8)}] (penta5) at (338:0.5) ;

        \coordinate[shift={(1.2,2.4)}] (penta9) at  ( 20:0.5) ;
        \coordinate[shift={(1.2,2.4)}] (penta10) at ( 92:0.5) ;
        \coordinate[shift={(1.2,2.4)}] (penta11) at (164:0.5) ;
        \coordinate[shift={(1.2,2.4)}] (penta12) at (236:0.5) ;
        \coordinate[shift={(1.2,2.4)}] (penta13) at (308:0.5) ;

        \coordinate[shift={(-1.2,2.6)}] (hexa1) at (  0:0.5) ;
        \coordinate[shift={(-1.2,2.6)}] (hexa2) at ( 60:0.5) ;
        \coordinate[shift={(-1.2,2.6)}] (hexa3) at (120:0.5) ;
        \coordinate[shift={(-1.2,2.6)}] (hexa4) at (180:0.5) ;
        \coordinate[shift={(-1.2,2.6)}] (hexa5) at (240:0.5) ;
        \coordinate[shift={(-1.2,2.6)}] (hexa6) at (300:0.5) ;

        \coordinate[shift={(1.76,0.52)}] (tri1) at ( 75:0.4) ;
        \coordinate[shift={(1.76,0.52)}] (tri2) at (195:0.4) ;
        \coordinate[shift={(1.76,0.52)}] (tri3) at (315:0.4) ;
        
        \coordinate[shift={(2.16,1.2)}] (tri4) at ( 30:0.4) ;
        \coordinate[shift={(2.16,1.2)}] (tri5) at (150:0.4) ;
        \coordinate[shift={(2.16,1.2)}] (tri6) at (270:0.4) ;

        \draw (penta5)--(penta1)--(penta2)--(penta3)--(penta4)--(penta5);
        \draw (penta9)--(penta10)--(penta11)--(penta12)--(penta13)--(penta9);

        \draw (hexa1)--(hexa2)--(hexa3)--(hexa4)--(hexa5)--(hexa6)--(hexa1);

        \draw (tri1)--(tri2)--(tri3)--(tri1);
        \draw (tri4)--(tri5)--(tri6)--(tri4);
        
        \draw (-3,0)-- (-3.5,-0.666);
        \draw (-3,3)-- (-3.5,2.5);
        \draw (-6,5)--  (-6.5,4.666);
        \draw (-6,8)-- (-6.5,8.333);
        \draw (-3,10)-- (-3,10.5);
        \draw (0,8)-- (0.5,8.333);
        \draw (6,8)-- (6.5,8.5);
        \draw (6,5)-- (6.5,5);
        \draw (7,1)-- (7.5,0.5);
        \draw (3,0)-- (3.5,-0.666);

        \fill[black] (-3,0) circle(3pt);
        \fill[black] (-3,3)circle(3pt);
        \fill[black] (-6,5)circle(3pt);
        \fill[black] (-6,8)circle(3pt);
        \fill[black] (-3,10)circle(3pt);
        \fill[black] (0,8)circle(3pt);
        \fill[black] (6,8)circle(3pt);
        \fill[black] (6,5)circle(3pt);
        \fill[black] (7,1) circle(3pt);
        \fill[black] (3,0)circle(3pt);
        \fill[black] (0,5)circle(3pt);
        \fill[black] (3,3)circle(3pt);

        \draw ($(3,0)!0.4!(3,3)$) --(tri2);
        \draw ($(7,1)!0.6!(3,3)$) --(tri1);
        \draw ($(7,1)!0.4!(3,0)$) --(tri3);

        \draw ($(6,5)!0.6!(3,3)$) --(tri5);
        \draw ($(7,1)!0.4!(3,3)$) --(tri6);
        \draw ($(6,5)!0.4!(7,1)$) --(tri4);

        \draw ($(3,3)!0.6!(0,5)$) --(penta1);
        \draw ($(0,5)!0.6!(-3,3)$) --(penta2);
        \draw ($(-3,3)!0.6!(-3,0)$) --(penta3);
        \draw ($(-3,0)!0.6!(3,0)$) --(penta4);
        \draw ($(3,0)!0.6!(3,3)$) --(penta5);

        \draw ($(6,5)!0.6!(6,8)$) --(penta9);
        \draw ($(6,8)!0.6!(0,8)$) --(penta10);
        \draw ($(0,8)!0.6!(0,5)$) --(penta11);
        \draw ($(0,5)!0.6!(3,3)$) --(penta12);
        \draw ($(3,3)!0.6!(6,5)$) --(penta13);

        \draw ($(0,5)!0.6!(0,8)$) --(hexa1);
        \draw ($(0,8)!0.6!(-3,10)$) --(hexa2);
        \draw ($(-3,10)!0.6!(-6,8)$) --(hexa3);
        \draw ($(-6,8)!0.6!(-6,5)$) --(hexa4);
        \draw ($(-6,5)!0.6!(-3,3)$) --(hexa5);
        \draw ($(-3,3)!0.6!(0,5)$) --(hexa6);

        \node at (-1.5,1) {$\mathcal{M}$};
        \node at (-1.7,2.4) {$\mathcal{M}$};
        \node at (1.5,0.8) {$\mathcal{M}$};
        \node at (1.6,2.6) {$\mathcal{M}$};
        \node at (-0.2,3.8) {$\mathcal{M}$};

        \node at (3.8,0.7) {$\mathcal{M}$};
        \node at (3.8,2) {$\mathcal{M}$};
        \node at (5.45,1.2) {$\mathcal{M}$};

        \node at (6,2.5) {$\mathcal{M}$};
        \node at (4.4,3) {$\mathcal{M}$};
        \node at (5.6,3.8) {$\mathcal{M}$};

        \node at (3,4.5) {$\mathcal{M}$};
        \node at (4.8,5.8) {$\mathcal{M}$};
        \node at (4.3,7.3) {$\mathcal{M}$};
        \node at (1.4,7.3) {$\mathcal{M}$};
        \node at (1.4,5.5) {$\mathcal{M}$};

        \node[shift={(-1.2,2.6)}] at ( 30:1.8) {$\mathcal{M}$};
        \node[shift={(-1.2,2.6)}] at ( 90:1.8) {$\mathcal{M}$} ;
        \node[shift={(-1.2,2.6)}] at (150:1.8) {$\mathcal{M}$} ;
        \node[shift={(-1.2,2.6)}] at (210:1.8) {$\mathcal{M}$} ;
        \node[shift={(-1.2,2.6)}] at (270:1.8) {$\mathcal{M}$} ;
        \node[shift={(-1.2,2.6)}] at (330:1.8) {$\mathcal{M}$} ;

        \foreach \x in {0.2,0.4,0.6,0.8}
        \fill[black] ($(3,0)!\x!(-3,0)$) circle (3pt);    
        \foreach \x in {0.2,0.4,0.6,0.8}
        \fill[black] ($(-3,0)!\x!(-3,3)$) circle (3pt); 
        \foreach \x in {0.2,0.4,0.6,0.8}
        \fill[black] ($(-3,3)!\x!(-6,5)$) circle (3pt);
        \foreach \x in {0.2,0.4,0.6,0.8}
        \fill[black] ($(-6,5)!\x!(-6,8)$) circle (3pt);
        \foreach \x in {0.2,0.4,0.6,0.8}
        \fill[black] ($(-6,8)!\x!(-3,10)$) circle (3pt);
        \foreach \x in {0.2,0.4,0.6,0.8}
        \fill[black] ($(-3,10)!\x!(0,8)$) circle (3pt);
        \foreach \x in {0.2,0.4,0.6,0.8}
        \fill[black] ($(0,8)!\x!(6,8)$) circle (3pt);
        \foreach \x in {0.2,0.4,0.6,0.8}
        \fill[black] ($(6,8)!\x!(6,5)$) circle (3pt);
        \foreach \x in {0.2,0.4,0.6,0.8}
        \fill[black] ($(6,5)!\x!(7,1)$) circle (3pt);     
        \foreach \x in {0.2,0.4,0.6,0.8}
        \fill[black] ($(7,1)!\x!(3,0)$) circle (3pt); 
        \foreach \x in {0.2,0.4,0.6,0.8}
        \fill[black] ($(3,0)!\x!(3,3)$) circle (3pt);
        \foreach \x in {0.2,0.4,0.6,0.8}
        \fill[black] ($(3,3)!\x!(7,1)$) circle (3pt);
        \foreach \x in {0.2,0.4,0.6,0.8}
        \fill[black] ($(3,3)!\x!(6,5)$) circle (3pt);
        \foreach \x in {0.2,0.4,0.6,0.8}
        \fill[black] ($(3,3)!\x!(0,5)$) circle (3pt);
        \foreach \x in {0.2,0.4,0.6,0.8}
        \fill[black] ($(0,5)!\x!(0,8)$) circle (3pt);
        \foreach \x in {0.2,0.4,0.6,0.8}
        \fill[black] ($(0,5)!\x!(-3,3)$) circle (3pt);      
      \end{scope}
      \\
    };
  \end{tikzpicture}
  \caption{Algorithm~\ref{alg:map} for
    $\mathcal{P}(f) = \mathcal{M}(k)$}
  \label{fig:alg:polymap:overview}
\end{figure}
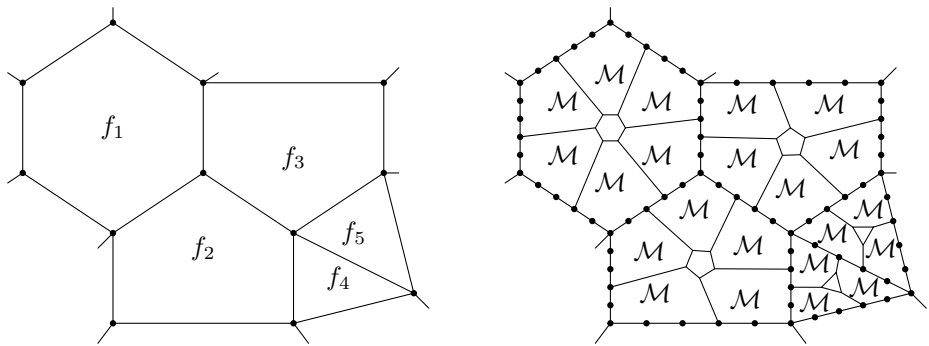

\begin{example}\label{ex:easy:expansion3}
  Using the expansion patches $\mathcal{H}$ and $\mathcal{Q}_2$ from
  Examples~\ref{ex:easy:expansion1} and \ref{ex:easy:expansion2}, we
  can construct from a polyhedral map a new one with arbitrarily many
  hexagons (or quadrangles) added, while inserting only $3$-valent (or
  $4$-valent) vertices. Given a map $M$ on a closed oriented
  $2$-manifold, we can simply use Theorem~\ref{thm:alg:polymap}
  repeatedly on $M$ with either $\mathcal{H}$ or $\mathcal{Q}_2$ to
  get the desired result. The theorem inserts at least a single
  hexagon or quadrangle during each step (which is quite an
  understatement, the number of polygons added is by far larger), so
  repeating this step eventually leads to a map that has more than
  a specified amount of hexagons or quadrangles.
\end{example}

Putting all these constructions together, we can formulate a proof
strategy for Question~\ref{quest:gen:eberhard} in the case of
$q = [q_s \times s, q_l \times l]$, $w = [r]$. We state it for $r = 4$
only, but the ideas carry over to $r = 3$, too.
\begin{proposition}\label{thm:main:const}
  Let $r = 4$, $p = (p_3, \dots, p_m)$ and $v = (v_3, \dots v_n)$ be
  an admissible pair of sequences. Let $w = [4]$ and
  $q = [q_s \times s, q_l \times l]$, where $s = 3$, $l > 4$,
  $q_s = l - 4$ and $q_l = 1$. Assume there exist
  \begin{itemize}
  \item an expansion $r$-patch $\mathcal{P}_N$ with outer tuple $o$
    consisting of $s$-gons and $l$-gons,
  \item an $o$-$4$-gonal $r$-patch $\mathcal{P}_F$ consisting of
    $s$-gons and $l$-gons, and
  \item an expansion $r$-patch $\mathcal{P}_P$ with the polyhedral
    property consisting of $s$-gons and $l$-gons.
  \end{itemize}
  Then there exists a polyhedral map on $S$ with $p$-vector
  $p + c \cdot q$ and $v$-vector $v + d \cdot w$ for infinitely many
  $c, d \in \mathbb{N}$.
\end{proposition}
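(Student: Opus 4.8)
The plan is to argue in two phases: first build a (possibly non-polyhedral) map on $S$ whose $p$-vector already has the shape $p + c_1 q$, and then repeatedly run Algorithm~\ref{alg:map} with the polyhedral patch $\mathcal{P}_P$ to turn this into genuine polyhedral maps realizing $p + c\,q$ for infinitely many $c$. The arithmetic backbone is the following observation, which one proves by combining Euler's relation for the disk with a double count of the vertex--edge and edge--face incidences of an $r=4$ patch: every expansion $4$-patch with self-fitting outer tuple satisfies $\sum_k (4-k)\,p_k = 0$, while every $o$-$k$-gonal $4$-patch satisfies $\sum_k (4-k)\,p_k = 4-k$. Since $\mathcal{P}_N$ and $\mathcal{P}_P$ are expansion patches, $\mathcal{P}_F$ is $o$-$4$-gonal, and all three consist only of faces of sizes $s=3$ and $l>4$, this forces their $p$-vectors to be non-negative integer multiples of $q = [(l-4)\times 3,\, l]$; write $p^{\mathcal{P}_N}=\beta_N q$, $p^{\mathcal{P}_F}=\beta_F q$, $p^{\mathcal{P}_P}=\beta_P q$, and note $\beta_N,\beta_F,\beta_P\ge 1$ because each patch has at least one face, hence (by the same identity, as $l>4$) at least one $l$-gon.

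For Phase~1, apply Theorem~\ref{thm:eberhard:extended:4}: since $(p,v)$ is admissible, there are $a,b\in\mathbb{N}$ and a polyhedral map $M_0$ on $S$ with $p$-vector $p + a\cdot[4]$ and $v$-vector $v + b\cdot[4]$ (the finitely many pairs excepted when $\chi=0$ are disposed of by adding to $p$ a bounded multiple of $q$, which escapes the exceptional list and only shifts the resulting values of $c$). Now run Algorithm~\ref{alg:map} on $M_0$ with the self-fitting tuple $w = o$, using $\mathcal{P}_F$ for each $4$-gon and $\mathcal{P}_N(k)$ (obtained from $\mathcal{P}_N$ by Algorithm~\ref{alg:expansion:patch}) for each $k$-gon with $k\ne 4$; all these patches are $o$-$k$-gonal for the relevant $k$, so the input conditions are met and the output $M_1$ is a map on $S$ with $v$-vector $v + d_1\cdot[4]$ for some $d_1$. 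Its $p$-vector is $\sum_{k\ne 4}(\#k\text{-gons of }M_0)\bigl([k] + k\beta_N q\bigr) + (\#4\text{-gons of }M_0)\,\beta_F q$; the $[k]$-summands reassemble precisely the non-$4$-faces of $M_0$, i.e.\ $p$, the former $4$-gons leave behind faces of sizes $3$ and $l$ only, and the remainder is a multiple of $q$, so the $p$-vector of $M_1$ equals $p + c_1 q$ for some $c_1\in\mathbb{N}$ and $M_1$ has no $4$-gon (as $l>4$).

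For Phase~2, apply Algorithm~\ref{alg:map} to $M_1$ with $\mathcal{P}(f)=\mathcal{P}_P(k)$ for every $k$-gon $f$. By Proposition~\ref{thm:alg:polymap} and the polyhedral property of $\mathcal{P}_P$, the result $M_2$ is a polyhedral map on $S$, with $v$-vector $v + d_2\cdot[4]$ and $p$-vector $(p + c_1 q) + \bigl(\sum_k k\,(\#k\text{-gons of }M_1)\bigr)\beta_P q = p + c_2 q$, where $c_2 > c_1$ and $d_2 > d_1$ since new faces and new $4$-valent vertices are inserted. Iterating this step on $M_2, M_3, \dots$ produces, for each $i$, a polyhedral map on $S$ with $p$-vector $p + c_i q$ and $v$-vector $v + d_i\cdot[4]$, with $c_i$ strictly increasing; hence infinitely many pairs $(c,d)$ are realizable, which is the assertion.

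The difficulty here is not a single hard step but the bookkeeping. The two delicate points are verifying that the mixed family $\{\mathcal{P}_F\}\cup\{\mathcal{P}_N(k)\}_{k\ge 3,\,k\ne 4}$ really does satisfy the hypotheses of Algorithm~\ref{alg:map} (a common self-fitting outer tuple, and $o$-$k$-gonal for the correct $k$), so that the glued object is again a map on $S$, and confirming that after Phase~1 no $4$-face survives while the surplus of small faces is captured exactly by an integer multiple of $q$ --- this is precisely where the patch identities $\sum_k(4-k)p_k=0$ and $\sum_k(4-k)p_k=4-k$ are essential. Handling the exceptional pairs of Theorem~\ref{thm:eberhard:extended:4} is a minor technical nuisance.
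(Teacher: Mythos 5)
Your overall route is the one the paper intends (the paper itself only sketches it): seed with Theorem~\ref{thm:eberhard:extended:4}, expand every face via Algorithm~\ref{alg:map} using $\mathcal{P}_N(k)$ and the filler $\mathcal{P}_F$, then iterate with $\mathcal{P}_P(k)$ to obtain polyhedrality and drive $c,d\to\infty$. The arithmetic identities $\sum_k(4-k)p_k=0$ for expansion $4$-patches and $\sum_k(4-k)p_k=4-k$ for $o$-$k$-gonal $4$-patches are correct (both follow from Euler's formula for the disk together with the fact that a self-fitting tuple of length $n$ sums to $2n$), and they do force the three patches' $p$-vectors to be positive multiples of $q$; this is a useful piece of bookkeeping that the paper leaves implicit.

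However, Phase~1 as written fails whenever the prescribed sequence has $p_4>0$. You assign $\mathcal{P}_F$ to \emph{every} quadrilateral of $M_0$, and $\mathcal{P}_F$ has $p$-vector $\beta_F q$ with no entry at $4$, so the quadrilaterals prescribed by $p$ are destroyed along with the $a$ surplus ones: the $[k]$-summands reassemble only the non-$4$-faces of $M_0$, whose $p$-vector is $p-p_4[4]$, not $p$. Since $q$ is supported on $\{3,l\}$ with $l>4$, this defect cannot be absorbed into $c\cdot q$ and it survives Phase~2, so your final maps realize $p-p_4[4]+c_i q$ rather than $p+c_i q$. The repair is immediate: both $\mathcal{P}_F$ and $\mathcal{P}_N(4)$ are $o$-$4$-gonal, so in Phase~1 assign $\mathcal{P}_N(4)$ to $p_4$ of the quadrilaterals of $M_0$ (which preserves them as $4$-gons, since $\mathcal{P}_N(4)$ has $p$-vector $[4]+4\beta_N q$) and reserve $\mathcal{P}_F$ for the $a$ surplus quadrilaterals only. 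With that one change the rest of your argument --- the treatment of the torus exceptions by first perturbing $p$ by a bounded multiple of $q$, and the strict growth of $c_i$ and $d_i$ under iteration with $\mathcal{P}_P$ --- goes through.
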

\begin{proof}[Idea of the proof]
  Use Theorem~\ref{thm:eberhard:extended:3} or
  Theorem~\ref{thm:eberhard:extended:4} as a starting map and apply
  Proposition~\ref{thm:main:const} for the given patches.
\end{proof}

\begin{remark}
  We will use Theorem~\ref{thm:main:const} heavily in the next
  section. Therefore, we want to stress what is needed to check to see
  if the prerequisites of Theorem~\ref{thm:main:const} are
  fulfilled. As stated, we need three $r$-patches, $\mathcal{P}_N$,
  $\mathcal{P}_F$ and $\mathcal{P}_P$, which are called in this
  manner for the rest of the article. The list of properties is:
  \begin{itemize}
  \item $\mathcal{P}_N$, $\mathcal{P}_F$ and $\mathcal{P}_P$ consist
    of only $s$-gons and $l$-gons and all inner vertices have valence
    $r$.
    \item $\mathcal{P}_N$ and $\mathcal{P}_P$ are expansion patches:
      \begin{itemize}
      \item $i_0$ and $i'_0$ are in sum incident to $r-1$ faces,
      \item $i_k$ and $i'_k$, are in sum incident to $r$ faces,
        $1 \leq k < m$,
      \item starting at the vertex $o_s$ and going in both directions
        for each pair of vertices $o_{s+k \mod n}$ and
        $o_{s-k \mod n}$ the identity
        $w_{\mathcal{P}_X}(o_{s+k \mod n}) + w_{\mathcal{P}_X}(o_{s-k
          \mod n}) = r$ holds, where we ``identify'' $o_n$ with
        $o_0$. For ease of comparison we also state the outer tuple
        $o$ for $\mathcal{P}_N$.
      \end{itemize}
    \item $\mathcal{P}_F$ is $o$-$4$-gonal, i.e. if
      starting at some vertex and looking at the number of inner faces
      incident to this vertex we see the pattern
      \begin{align*}
        1, o_1, \dots o_n, 1, o_1, \dots o_n, 1, o_1, \dots o_n, 1, o_1, \dots o_n, 
      \end{align*}
      where $o$ is the outer tuple of $\mathcal{P}_N$.
    \item $\mathcal{P}_P$ has the polyhedral property. For this we
      provide the corresponding edge patch to make the verification
      easier.
 \end{itemize}
\end{remark}

\section{$4$-valent Eberhard-type theorems with triangles}\label{sec:3:4}

In this section we want to prove $4$-valent Eberhard-type
theorems with triangles, i.e. for $q = [q_3 \times 3, q_l \times l]$,
$w = [w_4 \times 4]$, $l > 4$, $\gcd(q_3, q_l) = 1$. 

For all the proofs, we want to use Theorem \ref{thm:main:const},
therefore we need to have a construction scheme for patches with
arbitrarily large $l$-gons. These we get by the next three
constructions:
\begin{algorithm}\label{alg:edge:replacement:3:4:1} When we want
  to use this construction in this section we label an edge (the
  specified edge) with a square and point with arrows to a $k_1$-gon
  and a $k_2$-gon.

  \noindent\textbf{Input:}
  A $4$-patch with $p$-vector $p$ and a specified edge with
  exactly one vertex incident to some $k_1$-gon and the other vertex
  incident to some $k_2$-gon. We require that in the cyclic order
  around both end points, starting at the specified edge, the
  $k_1$-gon and the $k_2$-gon have the same position.

  \noindent\textbf{Output:}
  A new $4$-patch with $p$-vector $p - [k_1, k_2] + [2k \times 3] + [k_1 + k, k_2 + k]$ for all $k \in \mathbb{N}$.

  If every two faces of the $4$-patch meet properly, then this is carried over to the new patch.

  \noindent\textbf{Description:}
  Using the replacement of the single edge as seen in
  Fig. \ref{fig:alg:edge:replacement:3:4:1} results in a new
  $4$-patch with $p$-vector
  $p - [k_1, k_2] + [2 \times 3] + [k_1 + 1, k_2 + 1]$. The line on
  the left labeled with a square is the specified edge and the line
  on the right labeled with a square is a new edge that we can use
  to repeat the construction. Every time we use this construction we
  add two new triangles while increasing the number of vertices of
  the left and right polygon by one; doing this $k$ times gives the
  desired $4$-patch. That all faces meet properly follows by
  induction as this property is preserved in each step.
  \begin{figure}[!h]
    \centering
    \begin{tikzpicture}[line cap=round, line join=round]
    \matrix (m) [column sep=1cm] {
      \begin{scope}[scale=0.8]
        \draw[lsquare] (-1, 0) -- (1, 0);
        \draw (-1.2, 0.5) -- (-1, 0) -- (-1.2, -0.5);
        \draw (1.2, 0.5) -- (1, 0) -- (1.2, -0.5);
        \draw (-1, 0) -- (-0.8, -0.5);
        \draw (1, 0) -- (0.8, 0.5);
        \node (k1) at (-2, 0) {$k_1$};
        \node (k2) at (2, 0) {$k_2$};
        \draw[lface] (-1, 0) -- (k1);
        \draw[lface] ( 1, 0) -- (k2);
        \fill[black] (-1,0) circle(2pt);
        \fill[black] (1,0) circle(2pt);
        
      \end{scope}
      &
      \begin{scope}[scale=0.8]
        \draw[lsquare] (-1, 0.5) -- (1, 0.5);
        \draw (-1, -0.5) -- (1, -0.5);
        \draw (-1.2, 1) -- (-1, 0.5) -- (-1, -0.5) -- (-1.2, -1);
        \draw (1.2, 1) -- (1, 0.5) -- (1, -0.5) -- (1.2, -1);
        \draw (-1, 0.5) -- (1, -0.5);
        \draw (-1, -0.5) -- (-0.8, -1);
        \draw (1, 0.5) -- (0.8, 1);
        \node (k1) at (-2, 0) {$k_1$};
        \node (k2) at (2, 0) {$k_2$};
        \draw[lface] (-1, 0.5) -- (k1);
        \draw[lface] ( 1, 0.5) -- (k2);

        \fill[black] (-1,0.5) circle(2pt);
        \fill[black] (1,0.5) circle(2pt);
        
        \fill[black] (-1,-0.5) circle(2pt);
        \fill[black] (1,-0.5) circle(2pt);

        \node at (-0.7,-0.1) {$3$};
        \node at (0.7,0.1) {$3$};

      \end{scope}
      \\
    };
  \end{tikzpicture}
  \caption{}
  \label{fig:alg:edge:replacement:3:4:1}
\end{figure}
\end{algorithm}

\begin{algorithm}\label{alg:edge:replacement:3:4:2} When we want to
  use this construction in this section we label an edge (the
  specified edge) with a diamond and point with arrows to a $k_1$-gon
  and a $k_2$-gon.
  
  \noindent\textbf{Input:}
  A $4$-patch with $p$-vector $p$ and a specified edge which is the
  common edge of a $k_1$-gon and a $k_2$-gon.

  \noindent\textbf{Output:}
  A $4$-patch with $p$-vector
  $p - [k_1, k_2] + [(6k) \times 3] + [k_1 + 3k, k_2 + 3k]$ for all
  $k \in \mathbb{N}$.

  \noindent\textbf{Description:}
  Using the replacement of the single edge as seen in
  Fig. \ref{fig:alg:edge:replacement:3:4:2} results in a new
  $4$-patch with $p$-vector
  $p - [k_1, k_2] + [(6k) \times 3] + [k_1 + 3, k_2 + 3]$. The line on
  the left labeled with a diamond is the specified edge and the line
  on the right labeled with a diamond is a new edge which we can use
  to repeat the construction. Every time we use this construction we
  add six triangles while increasing the number of vertices of the
  left and right polygon by three; doing this $k$ times gives the
  desired $4$-patch.
  \begin{figure}[!h]
    \centering
    \begin{tikzpicture}[line cap=round, line join=round]
    \matrix (m) [column sep=1cm] {
      \begin{scope}[scale=1.2]
        \draw[ldiamond] (0, -1) -- (0, 1);
        \draw (0.5, -1.2) -- (0, -1) -- (-0.5, -1.2);
        \draw (0.5, 1.2) -- (0, 1) -- (-0.5, 1.2);
        \node (k1) at (-1, 0) {$k_1$};
        \node (k2) at (1, 0) {$k_2$};
        \draw[lface] (0, 0) -- (k1);
        \draw[lface] (0, 0) -- (k2);

        \fill[black] (0,-1) circle(1.5pt);
        \fill[black] (0,1) circle(1.5pt);
        
      \end{scope}
      &
      \begin{scope}[scale=1.2]
        \draw[ldiamond] (0, 0.6) -- (0, 1.2);
        \draw (0, 0.6) -- (0, -1);
        \draw (0.5, -1.2) -- (0, -1) -- (-0.5, -1.2);
        \draw (0.5, 1.4) -- (0, 1.2) -- (-0.5, 1.4);
        
        \draw (-0.5, 0) -- (0, -0.6) -- (0.5, 0);
        \draw (-0.5, 0) -- (0, -0.2) -- (0.5, 0);
        \draw (-0.5, 0) -- (0, 0.6) -- (0.5, 0);
        \draw (-0.5, 0) -- (0, 0.2) -- (0.5, 0);

        \node (k1) at (-2, 0) {$k_1 + 3$};
        \node (k2) at (2, 0) {$k_2 + 3$};
        \draw[lface] (0, 0.9) -- (k1);
        \draw[lface] (0, 0.9) -- (k2);

        \fill[black] (0,-1) circle(1pt);
        \fill[black] (0,1.2) circle(1pt);
        \fill[black] (0,0.2) circle(1pt);
        \fill[black] (0,0.6) circle(1pt);
        \fill[black] (0,-0.6) circle(1pt);
        \fill[black] (0,-0.2) circle(1pt);
        \fill[black] (0.5,0) circle(1pt);
        \fill[black] (-0.5,0) circle(1pt);

        \node at (-0.1,0) {$3$};
        \node at (0.1,0) {$3$};
        
        \node at (-0.12,0.3) {$3$};
        \node at (0.12,0.3) {$3$};
        \node at (-0.12,-0.3) {$3$};
        \node at (0.12,-0.3) {$3$};

      \end{scope}
      \\
    };
  \end{tikzpicture}
  \caption{}
  \label{fig:alg:edge:replacement:3:4:2}
  \end{figure}
\end{algorithm}

\begin{algorithm}\label{alg:edge:replacement:3:4:3}
  When we want to use this construction in this section we encircle a
  vertex (the specified vertex) and point with arrows to a $k_1$-gon
  and a $k_2$-gon.

  \noindent \textbf{Input:} A $4$-patch with $p$-vector $p$ and a
  specified vertex which is adjacent to both $k_1$-gon and a $k_2$-gon
  which do not share an edge containing this vertex.

  \noindent \textbf{Output:}
  A new $4$-patch with $p$-vector $p - [k_1, k_2] + [(6k) \times 3] +
  [k_1 + 3k, k_2 + 3k]$ for all $k \in \mathbb{N}$.

  \noindent \textbf{Description:}
  Using the replacement of the vertex as seen in Figure
  \ref{fig:alg:edge:replacement:3:4:3} results in a new $4$-patch with
  $p$-vector $p - [k_1, k_2] + [(6k) \times 3] + [k_1 + 3, k_2 +
    3]$. The encircled vertex on the left is the specified vertex and
  the encircled vertex on the right is a new vertex which we can use
  to repeat the construction. Every time we use this construction we
  add six triangles while increasing the number of vertices of the
  left and right polygon by three; doing this $k$ times gives the
  desired $4$-patch.
\end{algorithm}
\begin{figure}[!h]
  \centering
  \begin{tikzpicture}[line cap=round, line join=round]
    \matrix (m) [column sep=1cm] {
      \begin{scope}
        \draw (0.2, -0.5) -- (0, 0) -- (-0.2, -0.5);
        \draw (0.2, 0.5) -- (0, 0) -- (-0.2, 0.5);
        \node (k1) at (-1, 0) {$k_1$};
        \node (k2) at (1, 0) {$k_2$};
        \node[lvertex] at (0, 0) {};
        \draw[lface] (0, 0) -- (k1);
        \draw[lface] (0, 0) -- (k2);

        \fill[black] (0,0) circle(1.5pt);
        
      \end{scope}
      &
      \begin{scope}[scale=1.2]
        \draw (0.2, -1.5) -- (0, -1) -- (-0.2, -1.5);
        \draw (0.2, 1.5) -- (0, 1) -- (-0.2, 1.5);
        \draw (0, -1) -- (-0.25, -0.5) -- (-0.25, 0.5) -- (0, 1);
        \draw (0, -1) -- ( 0.25, -0.5) -- ( 0.25, 0.5) -- (0, 1);
        \draw (-0.25, -0.5) -- ( 0.25,  0.5) -- (-0.25,  0.5);
        \draw (-0.25,  0.5) -- ( 0.25, -0.5) -- (-0.25, -0.5);
        \node (k1) at (-2, 0) {$k_1 + 3$};
        \node (k2) at (2, 0) {$k_2 + 3$};
        \node[lvertex] at (0, 1) {};
        \draw[lface] (0, 1) -- (k1);
        \draw[lface] (0, 1) -- (k2);

        \fill[black] (0,-1) circle(1pt);
        \fill[black] (0,1) circle(1pt);
        \fill[black] (0,0) circle(1pt);
        \fill[black] (0.25,0.5) circle(1pt);
        \fill[black] (0.25,-0.5) circle(1pt);
        \fill[black] (-0.25,0.5) circle(1pt);
        \fill[black] (-0.25,-0.5) circle(1pt);
        
        \node at (-0.15,0) {$3$};
        \node at (0.15,0) {$3$};

        \node at (0,0.3) {$3$};
        \node at (0,-0.3) {$3$};
        \node at (0,0.7) {$3$};
        \node at (0,-0.7) {$3$};
      \end{scope}
      \\
    };
  \end{tikzpicture}
  \caption{}
  \label{fig:alg:edge:replacement:3:4:3}
\end{figure}

\begin{figure}[b!]
  \begin{center}
    \parbox{0.45\textwidth}{
      \center
      \begin{tikzpicture}[line cap=round, line join=round]
        \begin{scope}[scale=0.7, yscale=0.866]
          \draw (0.5,1) -- (-0.5,1) -- (-1.5,3) -- (-4,6) -- (-3.5,7)
          -- (-2.5,7) -- (-1,6.5) -- (-0.5,7) -- (0.5,7) -- (1,7.5) --
          (2.5,7) -- (1.5,3) -- (0.5,1); \draw
          (-1.5,3)--(-0.5,4.5)--(0.5,3)--(0.5,1); %
          \draw (-4,6)--(-1,6)--(-1,6.5); %
          \draw[lsquare] (-1.5,3)--(-1,6); %
          \draw (-0.5,4.5) --(-1,6); %
          \draw (-0.5,4.5) --(0.5,7); %
          \draw[lsquare] (0.5,3) --(0.5,7); %
          \draw (0.5,3) --(1,7.5); %

          \fill[black] (0.5,1)    circle (2pt);
          \fill[black] (-0.5,1)   circle (2pt);
          \fill[black] (-1.5,3)   circle (2pt);
          \fill[black] (-4,6)     circle (2pt);
          \fill[black] (-3.5,7)   circle (2pt);
          \fill[black] (-2.5,7)   circle (2pt);
          \fill[black] (-1,6.5)   circle (2pt);
          \fill[black] (-0.5,7)   circle (2pt);
          \fill[black] (0.5,7)    circle (2pt);
          \fill[black] (1,7.5)    circle (2pt);
          \fill[black] (2.5,7)    circle (2pt);
          \fill[black] (1.5,3)    circle (2pt);
          \fill[black] (-0.5,4.5) circle (2pt);
          \fill[black] (0.5,3)    circle (2pt);
          \fill[black] (-1,6)     circle (2pt);
          
          \node (n1) at (-2  ,6.5) {$5$};
          \node (n2) at (-0.5,2.5) {$5$};
          \node (n3) at ( 1,4) {$5$};
          \node (n4) at (-0.25,6.25) {$5$};
          \node at (0.75,6.75) {$3$};
          \node at (-1.8,4.5) {$3$};
          \node at (-0.75,4.5) {$3$};
          \node at (0,4.5) {$3$};

          \draw[lface] (-1,6)--(n1);
          \draw[lface] (-1.5,3)--(n2);
          \draw[lface] (0.5,3)--(n3);
          \draw[lface] (0.5,7)--(n4);
          
          \node[anchor= 90] at (0.5,1)  {$i_{0}'$};
          \node[anchor= 90] at (-0.5,1) {$i_0$};
          \node[anchor=  0] at (-1.5,3) {$i_1$};
          \node[anchor= 30] at (-4,6)   {$i_2=o_0$};
          \node[anchor=300] at (-3.5,7) {$\boldsymbol{o_s}$};
          \node[anchor=270] at (-2.5,7) {$o_2$};
          \node[anchor=315] at (-1,6.5) {$o_3$};
          \node[anchor=270] at (-0.5,7) {$o_4$};
          \node[anchor=270] at (0.5,7)  {$o_5$};
          \node[anchor=270] at (1,7.5)  {$o_6$};
          \node[anchor=240] at (2.5,7)  {$i_2'=o_7$};
          \node[anchor=180] at (1.5,3)  {$i_1'$};
        \end{scope}
      \end{tikzpicture}
      {\par\noindent\centering\footnotesize (a) $\mathcal{P}_N=\mathcal{P}_P$}
    }
    \hspace*{30pt}
    \parbox{0.45\textwidth}{
      \center
      \begin{tikzpicture}[line cap=round, line join=round]
        \begin{scope}[scale=0.4]
          \begin{scope}[yscale=0.866]
            \draw[very thick] (0.5,1)--(-0.5,1)--(-1.5,3)--(-4,6)--(-3.5,7)--(-2.5,7)--(-1,6.5)--(-0.5,7)--(0.5,7)--(1,7.5)--(2.5,7)--(1.5,3)--(0.5,1);
            \draw (-1.5,3)--(-0.5,4.5)--(0.5,3)--(0.5,1); %
            \draw (-4,6)--(-1,6)--(-1,6.5); %
            \draw (-1.5,3)--(-1,6); %
            \draw (-0.5,4.5) --(-1,6); %
            \draw (-0.5,4.5) --(0.5,7); %
            \draw (0.5,3) --(0.5,7); %
            \draw (0.5,3) --(1,7.5); %

            \fill[black] (0.5,1)    circle (3pt);
            \fill[black] (-0.5,1)   circle (3pt);
            \fill[black] (-1.5,3)   circle (3pt);
            \fill[black] (-4,6)     circle (3pt);
            \fill[black] (-3.5,7)   circle (3pt);
            \fill[black] (-2.5,7)   circle (3pt);
            \fill[black] (-1,6.5)   circle (3pt);
            \fill[black] (-0.5,7)   circle (3pt);
            \fill[black] (0.5,7)    circle (3pt);
            \fill[black] (1,7.5)    circle (3pt);
            \fill[black] (2.5,7)    circle (3pt);
            \fill[black] (1.5,3)    circle (3pt);
            \fill[black] (-0.5,4.5) circle (3pt);
            \fill[black] (0.5,3)    circle (3pt);
            \fill[black] (-1,6)     circle (3pt);
          \end{scope}
          \begin{scope}[rotate=-60, yscale=0.866]
            \draw[very thick] (0.5,1)--(-0.5,1)--(-1.5,3)--(-4,6)--(-3.5,7)--(-2.5,7)--(-1,6.5)--(-0.5,7)--(0.5,7)--(1,7.5)--(2.5,7)--(1.5,3)--(0.5,1);
            \draw (-1.5,3)--(-0.5,4.5)--(0.5,3)--(0.5,1); %
            \draw (-4,6)--(-1,6)--(-1,6.5); %
            \draw (-1.5,3)--(-1,6); %
            \draw (-0.5,4.5) --(-1,6); %
            \draw (-0.5,4.5) --(0.5,7); %
            \draw (0.5,3) --(0.5,7); %
            \draw (0.5,3) --(1,7.5); %

            \fill[black] (0.5,1)    circle (3pt);
            \fill[black] (-0.5,1)   circle (3pt);
            \fill[black] (-1.5,3)   circle (3pt);
            \fill[black] (-4,6)     circle (3pt);
            \fill[black] (-3.5,7)   circle (3pt);
            \fill[black] (-2.5,7)   circle (3pt);
            \fill[black] (-1,6.5)   circle (3pt);
            \fill[black] (-0.5,7)   circle (3pt);
            \fill[black] (0.5,7)    circle (3pt);
            \fill[black] (1,7.5)    circle (3pt);
            \fill[black] (2.5,7)    circle (3pt);
            \fill[black] (1.5,3)    circle (3pt);
            \fill[black] (-0.5,4.5) circle (3pt);
            \fill[black] (0.5,3)    circle (3pt);
            \fill[black] (-1,6)     circle (3pt);
          \end{scope}
          \begin{scope}[yscale=0.866,shift={(0 cm,14 cm)},rotate=180]
            \draw[very thick] (0.5,1)--(-0.5,1)--(-1.5,3)--(-4,6)--(-3.5,7)--(-2.5,7)--(-1,6.5)--(-0.5,7)--(0.5,7)--(1,7.5)--(2.5,7)--(1.5,3)--(0.5,1);
            \draw (-1.5,3)--(-0.5,4.5)--(0.5,3)--(0.5,1); %
            \draw (-4,6)--(-1,6)--(-1,6.5); %
            \draw (-1.5,3)--(-1,6); %
            \draw (-0.5,4.5) --(-1,6); %
            \draw (-0.5,4.5) --(0.5,7); %
            \draw (0.5,3) --(0.5,7); %
            \draw (0.5,3) --(1,7.5); %

            \fill[black] (0.5,1)    circle (3pt);
            \fill[black] (-0.5,1)   circle (3pt);
            \fill[black] (-1.5,3)   circle (3pt);
            \fill[black] (-4,6)     circle (3pt);
            \fill[black] (-3.5,7)   circle (3pt);
            \fill[black] (-2.5,7)   circle (3pt);
            \fill[black] (-1,6.5)   circle (3pt);
            \fill[black] (-0.5,7)   circle (3pt);
            \fill[black] (0.5,7)    circle (3pt);
            \fill[black] (1,7.5)    circle (3pt);
            \fill[black] (2.5,7)    circle (3pt);
            \fill[black] (1.5,3)    circle (3pt);
            \fill[black] (-0.5,4.5) circle (3pt);
            \fill[black] (0.5,3)    circle (3pt);
            \fill[black] (-1,6)     circle (3pt);
          \end{scope}
          \begin{scope}[shift={(0 cm,12.124 cm)},rotate=120,yscale=0.866]
            \draw[very thick] (0.5,1)--(-0.5,1)--(-1.5,3)--(-4,6)--(-3.5,7)--(-2.5,7)--(-1,6.5)--(-0.5,7)--(0.5,7)--(1,7.5)--(2.5,7)--(1.5,3)--(0.5,1);
            \draw (-1.5,3)--(-0.5,4.5)--(0.5,3)--(0.5,1); %
            \draw (-4,6)--(-1,6)--(-1,6.5); %
            \draw (-1.5,3)--(-1,6); %
            \draw (-0.5,4.5) --(-1,6); %
            \draw (-0.5,4.5) --(0.5,7); %
            \draw (0.5,3) --(0.5,7); %
            \draw (0.5,3) --(1,7.5); %
            \draw (0.5,1)--(-0.5,1)--(-1.5,3)--(-4,6)--(-3.5,7)--(-2.5,7)--(-1,6.5)--(-0.5,7)--(0.5,7)--(1,7.5)--(2.5,7)--(1.5,3)--(0.5,1);

            \fill[black] (0.5,1)    circle (3pt);
            \fill[black] (-0.5,1)   circle (3pt);
            \fill[black] (-1.5,3)   circle (3pt);
            \fill[black] (-4,6)     circle (3pt);
            \fill[black] (-3.5,7)   circle (3pt);
            \fill[black] (-2.5,7)   circle (3pt);
            \fill[black] (-1,6.5)   circle (3pt);
            \fill[black] (-0.5,7)   circle (3pt);
            \fill[black] (0.5,7)    circle (3pt);
            \fill[black] (1,7.5)    circle (3pt);
            \fill[black] (2.5,7)    circle (3pt);
            \fill[black] (1.5,3)    circle (3pt);
            \fill[black] (-0.5,4.5) circle (3pt);
            \fill[black] (0.5,3)    circle (3pt);
            \fill[black] (-1,6)     circle (3pt);
          \end{scope}
        \end{scope}
      \end{tikzpicture}
      {\par\noindent\centering\footnotesize (b) Edge patch of $\mathcal{P}_N$}
    }

    \vspace*{15px}
    \parbox{1.0\textwidth}{
      \center
      \begin{tikzpicture}[line cap=round, line join=round]
        \begin{scope}[yscale=0.866]
          \draw(0,1.5) -- (-1,2) -- (-0.5,3) -- (0.5,3.5) -- (1.5,3) --
          (2.5,3) -- (4.5,2.5) -- (5.5,3) -- (6.5,2.5) -- (5.5,2) --
          (5,1) -- (4,0.5) -- (3,0.5) -- (2,0) -- (2,1) -- (1,1.5) --
          (0,1.5); \draw (3,0.5)--(2,1)--(2,2)--(1,1.5);
          \draw (1.5,3) -- (2,2) -- (2.5,3);
          \draw (5,1) -- (4.5,2.5) -- (5.5,2) -- (5.5,3);
          
          \node (n1) at (0.5,2.5) {$7$};
          \node (n2) at (3.5,1.5) {$7$};
          \node at (2.25,0.5) {$3$};
          \node at (1.75,1.5) {$3$};
          \node at (2,2.666) {$3$};
          \node at (5,1.8) {$3$};
          \node at (5.2,2.5) {$3$};
          \node at (5.8,2.5) {$3$};

          \draw[lface](2,2)--(n1);
          \draw[lface](2,2)--(n2);
          
          \fill[black] (0,1.5)   circle (2pt);
          \fill[black] (-1,2)    circle (2pt);
          \fill[black] (-0.5,3)  circle (2pt);  
          \fill[black] (0.5,3.5) circle (2pt); 
          \fill[black] (1.5,3)   circle (2pt); 
          \fill[black] (2.5,3)   circle (2pt); 
          \fill[black] (4.5,2.5) circle (2pt); 
          \fill[black] (5.5,3)   circle (2pt); 
          \fill[black] (6.5,2.5) circle (2pt); 
          \fill[black] (5.5,2)   circle (2pt);
          \fill[black] (5,1)     circle (2pt);
          \fill[black] (4,0.5)   circle (2pt);
          \fill[black] (3,0.5)   circle (2pt);
          \fill[black] (2,0)     circle (2pt);
          \fill[black] (2,1)     circle (2pt);
          \fill[black] (1,1.5)   circle (2pt); 
          \fill[black] (2,2)     circle (2pt);
          
          \node[lvertex] at (2,2) {};
          \node[anchor= 90] at (0,1.5)   {$i_{0}$};
          \node[anchor=  0] at (-1,2)    {$i_{1}$};
          \node[anchor=330] at (-0.5,3)  {$i_{2}=o_{0}$};
          \node[anchor=270] at (0.5,3.5) {$\boldsymbol{o_{s}}$};
          \node[anchor=240] at (1.5,3)   {$o_{2}$};
          \node[anchor=270] at (2.5,3)   {$o_{3}$};
          \node[anchor=270] at (4.5,2.5) {$o_{4}$};
          \node[anchor=270] at (5.5,3)   {$o_{5}$};
          \node[anchor=180] at (6.5,2.5) {$o_{6}$};
          \node[anchor=150] at (5.5,2)   {$o_{7}$};
          \node[anchor=140] at (5,1)     {$o_{8}$};
          \node[anchor=150] at (4,0.5)   {$o_{9}$};
          \node[anchor= 90] at (3,0.5)   {$o_{10}$};
          \node[anchor= 90] at (2,0)     {$i_{2}'=o_{11}$};
          \node[anchor= 45] at (2,1)     {$i_{1}'$};
          \node[anchor= 90] at (1,1.5)   {$i_{0}'$};

        \end{scope}
      \end{tikzpicture}
      {\par\noindent\centering\footnotesize (c) $\mathcal{P}_N$}
    }
    \vspace*{15px}
    
    \parbox{0.45\textwidth}{
      \center
      \begin{tikzpicture}[line cap=round, line join=round]
        \begin{scope}[scale=3]
          \node (n1) at (0.065666, -0.104494) {3};
          \node (n2) at (-0.505302, -0.226440) {5};
          \node (n3) at (-0.168506, -0.785841) {3};
          \node (n4) at (0.257283, -0.572559) {5};
          \node (n5) at (0.781133, -0.189136) {3};
          \node (n6) at (0.515595, 0.210065) {5};
          \node (n7) at (-0.328177, 0.522290) {5};
          \node (n8) at (-0.124628, 0.912568) {3};
          \node (n9) at (-0.628342, 0.681971) {5};
          \node[anchor=280] (n10) at (-0.324759, 0.928107) {3};
          \node (n11) at (-0.581971, -0.728342) {5};
          \node[anchor= 10] (n12) at (-0.928107, -0.324759) {3};
          \node (n13) at (0.748342, -0.571971) {5};
          \node[anchor=100] (n14) at (0.324759, -0.928107) {3};
          \node (n15) at (0.581971, 0.728342) {5};
          \node[anchor=190] (n16) at (0.928107, 0.324759) {3};

          \fill[black] (-0.145531, 0.231610) circle (0.2pt);
          \fill[black] (-0.063274, -0.419927) circle (0.2pt);
          \fill[black] (0.405804, -0.125166) circle (0.2pt);
          \fill[black] (-0.993712, 0.111964) circle (0.2pt);
          \fill[black] (-0.993712, -0.111964) circle (0.2pt);
          \fill[black] (-0.330279, -0.943883) circle (0.2pt);
          \fill[black] (-0.111964, -0.993712) circle (0.2pt);
          \fill[black] (0.111964, -0.993712) circle (0.2pt);
          \fill[black] (0.943883, -0.330279) circle (0.2pt);
          \fill[black] (0.993712, -0.111964) circle (0.2pt);
          \fill[black] (0.993712, 0.111964) circle (0.2pt);
          \fill[black] (0.330279, 0.943883) circle (0.2pt);
          \fill[black] (0.111964, 0.993712) circle (0.2pt);
          \fill[black] (-0.943883, 0.330279) circle (0.2pt);
          \fill[black] (-0.111964, 0.993712) circle (0.2pt);
          \fill[black] (-0.532032, 0.846724) circle (0.2pt);
          \fill[black] (-0.707107, 0.707107) circle (0.2pt);
          \fill[black] (-0.846724, 0.532032) circle (0.2pt);
          \fill[black] (-0.330279, 0.943883) circle (0.2pt);
          \fill[black] (-0.846724, -0.532032) circle (0.2pt);
          \fill[black] (-0.707107, -0.707107) circle (0.2pt);
          \fill[black] (-0.532032, -0.846724) circle (0.2pt);
          \fill[black] (-0.943883, -0.330279) circle (0.2pt);
          \fill[black] (0.532032, -0.846724) circle (0.2pt);
          \fill[black] (0.707107, -0.707107) circle (0.2pt);
          \fill[black] (0.846724, -0.532032) circle (0.2pt);
          \fill[black] (0.330279, -0.943883) circle (0.2pt);
          \fill[black] (0.846724, 0.532032) circle (0.2pt);
          \fill[black] (0.707107, 0.707107) circle (0.2pt);
          \fill[black] (0.532032, 0.846724) circle (0.2pt);
          \fill[black] (0.943883, 0.330279) circle (0.2pt);

          \coordinate (x0) at (-0.145531, 0.231610);
          \coordinate (x1) at (-0.063274, -0.419927);
          \coordinate (x2) at (0.405804, -0.125166);
          \coordinate (x3) at (-0.993712, 0.111964);
          \coordinate (x4) at (-0.993712, -0.111964);
          \coordinate (x5) at (-0.330279, -0.943883);
          \coordinate (x6) at (-0.111964, -0.993712);
          \coordinate (x7) at (0.111964, -0.993712);
          \coordinate (x8) at (0.943883, -0.330279);
          \coordinate (x9) at (0.993712, -0.111964);
          \coordinate (x10) at (0.993712, 0.111964);
          \coordinate (x11) at (0.330279, 0.943883);
          \coordinate (x12) at (0.111964, 0.993712);
          \coordinate (x13) at (-0.943883, 0.330279);
          \coordinate (x14) at (-0.111964, 0.993712);
          \coordinate (x15) at (-0.532032, 0.846724);
          \coordinate (x16) at (-0.707107, 0.707107);
          \coordinate (x17) at (-0.846724, 0.532032);
          \coordinate (x18) at (-0.330279, 0.943883);
          \coordinate (x19) at (-0.846724, -0.532032);
          \coordinate (x20) at (-0.707107, -0.707107);
          \coordinate (x21) at (-0.532032, -0.846724);
          \coordinate (x22) at (-0.943883, -0.330279);
          \coordinate (x23) at (0.532032, -0.846724);
          \coordinate (x24) at (0.707107, -0.707107);
          \coordinate (x25) at (0.846724, -0.532032);
          \coordinate (x26) at (0.330279, -0.943883);
          \coordinate (x27) at (0.846724, 0.532032);
          \coordinate (x28) at (0.707107, 0.707107);
          \coordinate (x29) at (0.532032, 0.846724);
          \coordinate (x30) at (0.943883, 0.330279);

          \draw (x0) -- (x1);
          \draw (x1) -- (x2);
          \draw (x2) -- (x0);
          \draw (x0) -- (x3);
          \draw (x3) -- (x4);
          \draw[ldiamond] (x4) -- coordinate[midway](m1) (x5);
          \draw (x5) -- (x1);
          \draw (x5) -- (x6);
          \draw (x6) -- (x1);
          \draw (x6) -- (x7);
          \draw[ldiamond] (x7) -- coordinate[midway](m2) (x8);
          \draw (x8) -- (x2);
          \draw (x8) -- (x9);
          \draw (x9) -- (x2);
          \draw (x9) -- (x10);
          \draw[ldiamond] (x10) -- coordinate[midway](m3) (x11);
          \draw (x11) -- (x0);
          \draw (x11) -- (x12);
          \draw (x12) -- (x13);
          \draw (x13) -- (x3);
          \draw (x12) -- (x14);
          \draw (x14) -- (x13);
          \draw (x14) -- (x15);
          \draw (x15) -- (x16);
          \draw (x16) -- (x17);
          \draw (x17) -- (x13);
          \draw (x14) -- (x18);
          \draw (x18) -- (x15);
          \draw (x4) -- (x19);
          \draw (x19) -- (x20);
          \draw (x20) -- (x21);
          \draw (x21) -- (x5);
          \draw (x4) -- (x22);
          \draw (x22) -- (x19);
          \draw (x7) -- (x23);
          \draw (x23) -- (x24);
          \draw (x24) -- (x25);
          \draw (x25) -- (x8);
          \draw (x7) -- (x26);
          \draw (x26) -- (x23);
          \draw (x10) -- (x27);
          \draw (x27) -- (x28);
          \draw (x28) -- (x29);
          \draw (x29) -- (x11);
          \draw (x10) -- (x30);
          \draw (x30) -- (x27);

          \draw[lface] (m1) -- (n2);
          \draw[lface] (m1) -- (n11);
          \draw[lface] (m2) -- (n4);
          \draw[lface] (m2) -- (n13);
          \draw[lface] (m3) -- (n6);
          \draw[lface] (m3) -- (n15);
          \draw[lface] (x13) -- (n7);
          \draw[lface] (x13) -- (n9);

          \node[lvertex] at (x13) {};
        \end{scope}
      \end{tikzpicture}
      {\par\noindent\centering\footnotesize (d) $\mathcal{P}_F$}
    }
    \hspace{10px}
    \parbox{0.45\textwidth}{
      \center
      \begin{tikzpicture}[line cap=round, line join=round]
        \begin{scope}[scale=3]
          \node (n1) at (0.800523, 0.062019) {3};
          \node (n2) at (0.931003, 0.140216) {3};
          \node (n3) at (0.910836, 0.232924) {3};
          \node (n4) at (0.640446, 0.281189) {7};
          \node (n5) at (0.138539, 0.124657) {3};
          \node (n6) at (0.152558, -0.085147) {3};
          \node (n7) at (0.363865, -0.075274) {3};
          \node (n8) at (0.627263, -0.283950) {7};
          \node (n9) at (0.231615, -0.789247) {3};
          \node (n10) at (0.053369, -0.891434) {3};
          \node (n11) at (-0.109343, -0.608463) {7};
          \node (n12) at (-0.511642, -0.595120) {3};
          \node (n13) at (-0.727638, -0.306583) {3};
          \node (n14) at (-0.513635, 0.127724) {7};
          \node[anchor=0] (n15) at (-0.970516, 0.211123) {3};
          \node (n16) at (-0.933787, -0.213847) {3};
          \node (n17) at (0.215101, -0.928496) {3};
          \node[anchor=180] (n18) at (0.970516, -0.211123) {3};
          \node (n19) at (0.306193, 0.726498) {7};
          \node[anchor=240] (n20) at (0.595211, 0.795109) {3};
          \node[anchor=260] (n21) at (0.116488, 0.977285) {3};
          \node[anchor=260] (n22) at (0.211123, 0.970516) {3};
          \node (n23) at (-0.204433, 0.852693) {3};
          \node (n24) at (-0.573205, 0.690563) {7};
          \node[anchor=280] (n25) at (-0.475997, 0.871723) {3};
          \node[anchor=300] (n26) at (-0.795109, 0.595211) {3};
          \node (n27) at (-0.626031, -0.704383) {7};
          \node[anchor= 30] (n28) at (-0.871723, -0.475997) {3};
          \node[anchor= 60] (n29) at (-0.595211, -0.795109) {3};
          \node (n30) at (0.592349, -0.705907) {7};
          \node[anchor=120] (n31) at (0.475997, -0.871723) {3};
          \node[anchor=150] (n32) at (0.795109, -0.595211) {3};

          \fill[black] (0.997452, 0.071339) circle (0.3pt);
          \fill[black] (0.818411, 0.136744) circle (0.3pt);
          \fill[black] (0.585705, -0.022025) circle (0.3pt);
          \fill[black] (0.977147, 0.212565) circle (0.3pt);
          \fill[black] (0.936950, 0.349464) circle (0.3pt);
          \fill[black] (0.877679, 0.479249) circle (0.3pt);
          \fill[black] (0.800541, 0.599278) circle (0.3pt);
          \fill[black] (0.207721, 0.383537) circle (0.3pt);
          \fill[black] (0.256114, 0.042078) circle (0.3pt);
          \fill[black] (-0.048218, -0.051643) circle (0.3pt);
          \fill[black] (0.249777, -0.245876) circle (0.3pt);
          \fill[black] (0.160107, -0.679399) circle (0.3pt);
          \fill[black] (0.463398, -0.690889) circle (0.3pt);
          \fill[black] (0.936950, -0.349464) circle (0.3pt);
          \fill[black] (0.997452, -0.071339) circle (0.3pt);
          \fill[black] (0.071339, -0.997452) circle (0.3pt);
          \fill[black] (-0.071339, -0.997452) circle (0.3pt);
          \fill[black] (-0.493700, -0.370772) circle (0.3pt);
          \fill[black] (-0.349464, -0.936950) circle (0.3pt);
          \fill[black] (-0.212565, -0.977147) circle (0.3pt);
          \fill[black] (-0.691761, -0.477638) circle (0.3pt);
          \fill[black] (-0.997452, -0.071339) circle (0.3pt);
          \fill[black] (-0.329395, 0.583481) circle (0.3pt);
          \fill[black] (-0.936950, 0.349464) circle (0.3pt);
          \fill[black] (-0.997452, 0.071339) circle (0.3pt);
          \fill[black] (-0.977147, 0.212565) circle (0.3pt);
          \fill[black] (-0.977147, -0.212565) circle (0.3pt);
          \fill[black] (0.212565, -0.977147) circle (0.3pt);
          \fill[black] (0.977147, -0.212565) circle (0.3pt);
          \fill[black] (0.707107, 0.707107) circle (0.3pt);
          \fill[black] (0.479249, 0.877679) circle (0.3pt);
          \fill[black] (0.349464, 0.936950) circle (0.3pt);
          \fill[black] (-0.071339, 0.997452) circle (0.3pt);
          \fill[black] (0.599278, 0.800541) circle (0.3pt);
          \fill[black] (0.071339, 0.997452) circle (0.3pt);
          \fill[black] (0.212565, 0.977147) circle (0.3pt);
          \fill[black] (-0.212565, 0.977147) circle (0.3pt);
          \fill[black] (-0.349464, 0.936950) circle (0.3pt);
          \fill[black] (-0.599278, 0.800541) circle (0.3pt);
          \fill[black] (-0.707107, 0.707107) circle (0.3pt);
          \fill[black] (-0.877679, 0.479249) circle (0.3pt);
          \fill[black] (-0.479249, 0.877679) circle (0.3pt);
          \fill[black] (-0.800541, 0.599278) circle (0.3pt);
          \fill[black] (-0.936950, -0.349464) circle (0.3pt);
          \fill[black] (-0.800541, -0.599278) circle (0.3pt);
          \fill[black] (-0.707107, -0.707107) circle (0.3pt);
          \fill[black] (-0.479249, -0.877679) circle (0.3pt);
          \fill[black] (-0.877679, -0.479249) circle (0.3pt);
          \fill[black] (-0.599278, -0.800541) circle (0.3pt);
          \fill[black] (0.349464, -0.936950) circle (0.3pt);
          \fill[black] (0.599278, -0.800541) circle (0.3pt);
          \fill[black] (0.707107, -0.707107) circle (0.3pt);
          \fill[black] (0.877679, -0.479249) circle (0.3pt);
          \fill[black] (0.479249, -0.877679) circle (0.3pt);
          \fill[black] (0.800541, -0.599278) circle (0.3pt);

          \coordinate (x0) at (0.997452, 0.071339);
          \coordinate (x1) at (0.818411, 0.136744);
          \coordinate (x2) at (0.585705, -0.022025);
          \coordinate (x3) at (0.977147, 0.212565);
          \coordinate (x4) at (0.936950, 0.349464);
          \coordinate (x5) at (0.877679, 0.479249);
          \coordinate (x6) at (0.800541, 0.599278);
          \coordinate (x7) at (0.207721, 0.383537);
          \coordinate (x8) at (0.256114, 0.042078);
          \coordinate (x9) at (-0.048218, -0.051643);
          \coordinate (x10) at (0.249777, -0.245876);
          \coordinate (x11) at (0.160107, -0.679399);
          \coordinate (x12) at (0.463398, -0.690889);
          \coordinate (x13) at (0.936950, -0.349464);
          \coordinate (x14) at (0.997452, -0.071339);
          \coordinate (x15) at (0.071339, -0.997452);
          \coordinate (x16) at (-0.071339, -0.997452);
          \coordinate (x17) at (-0.493700, -0.370772);
          \coordinate (x18) at (-0.349464, -0.936950);
          \coordinate (x19) at (-0.212565, -0.977147);
          \coordinate (x20) at (-0.691761, -0.477638);
          \coordinate (x21) at (-0.997452, -0.071339);
          \coordinate (x22) at (-0.329395, 0.583481);
          \coordinate (x23) at (-0.936950, 0.349464);
          \coordinate (x24) at (-0.997452, 0.071339);
          \coordinate (x25) at (-0.977147, 0.212565);
          \coordinate (x26) at (-0.977147, -0.212565);
          \coordinate (x27) at (0.212565, -0.977147);
          \coordinate (x28) at (0.977147, -0.212565);
          \coordinate (x29) at (0.707107, 0.707107);
          \coordinate (x30) at (0.479249, 0.877679);
          \coordinate (x31) at (0.349464, 0.936950);
          \coordinate (x32) at (-0.071339, 0.997452);
          \coordinate (x33) at (0.599278, 0.800541);
          \coordinate (x34) at (0.071339, 0.997452);
          \coordinate (x35) at (0.212565, 0.977147);
          \coordinate (x36) at (-0.212565, 0.977147);
          \coordinate (x37) at (-0.349464, 0.936950);
          \coordinate (x38) at (-0.599278, 0.800541);
          \coordinate (x39) at (-0.707107, 0.707107);
          \coordinate (x40) at (-0.877679, 0.479249);
          \coordinate (x41) at (-0.479249, 0.877679);
          \coordinate (x42) at (-0.800541, 0.599278);
          \coordinate (x43) at (-0.936950, -0.349464);
          \coordinate (x44) at (-0.800541, -0.599278);
          \coordinate (x45) at (-0.707107, -0.707107);
          \coordinate (x46) at (-0.479249, -0.877679);
          \coordinate (x47) at (-0.877679, -0.479249);
          \coordinate (x48) at (-0.599278, -0.800541);
          \coordinate (x49) at (0.349464, -0.936950);
          \coordinate (x50) at (0.599278, -0.800541);
          \coordinate (x51) at (0.707107, -0.707107);
          \coordinate (x52) at (0.877679, -0.479249);
          \coordinate (x53) at (0.479249, -0.877679);
          \coordinate (x54) at (0.800541, -0.599278);

          \draw (x0) -- (x1);
          \draw (x1) -- (x2);
          \draw (x2) -- (x0);
          \draw (x0) -- (x3);
          \draw (x3) -- (x1);
          \draw (x3) -- (x4);
          \draw (x4) -- (x1);
          \draw (x4) -- (x5);
          \draw (x5) -- (x6);
          \draw (x6) -- (x7);
          \draw (x7) -- (x8);
          \draw (x8) -- (x2);
          \draw (x7) -- (x9);
          \draw[lsquare] (x9) -- (x8);
          \draw[lface] (x8) -- (n4);
          \draw[lface] (x9) -- (n11);
          \draw (x9) -- (x10);
          \draw (x10) -- (x8);
          \draw (x10) -- (x2);
          \draw (x10) -- (x11);
          \draw (x11) -- (x12);
          \draw[ldiamond] (x12) -- (x13) node[midway] (m1) {};
          \draw[lface] (m1) -- (n8);
          \draw[lface] (m1) -- (n30);
          \draw (x13) -- (x14);
          \draw (x14) -- (x0);
          \draw (x11) -- (x15);
          \draw (x15) -- (x12);
          \draw (x11) -- (x16);
          \draw (x16) -- (x15);
          \draw (x9) -- (x17);
          \draw (x17) -- (x18);
          \draw (x18) -- (x19);
          \draw (x19) -- (x16);
          \draw (x17) -- (x20);
          \draw (x20) -- (x18);
          \draw (x17) -- (x21);
          \draw[lsquare] (x21) -- (x20);
          \draw[lface] (x21) -- (n14);
          \draw[lface] (x20) -- (n27);
          \draw (x7) -- (x22);
          \draw (x22) -- (x23);
          \draw (x23) -- (x24);
          \draw (x24) -- (x21);
          \draw (x23) -- (x25);
          \draw (x25) -- (x24);
          \draw (x21) -- (x26);
          \draw (x26) -- (x20);
          \draw (x15) -- (x27);
          \draw (x27) -- (x12);
          \draw (x13) -- (x28);
          \draw (x28) -- (x14);
          \draw (x6) -- (x29);
          \draw (x29) -- (x30);
          \draw (x30) -- (x31);
          \draw (x31) -- (x32);
          \draw (x32) -- (x22);
          \draw (x29) -- (x33);
          \draw (x33) -- (x30);
          \draw (x31) -- (x34);
          \draw (x34) -- (x32);
          \draw (x31) -- (x35);
          \draw (x35) -- (x34);
          \draw (x32) -- (x36);
          \draw (x36) -- (x22);
          \draw (x36) -- (x37);
          \draw (x37) -- (x38);
          \draw (x38) -- (x39);
          \draw (x39) -- (x40);
          \draw (x40) -- (x23);
          \draw (x37) -- (x41);
          \draw (x41) -- (x38);
          \draw (x39) -- (x42);
          \draw (x42) -- (x40);
          \draw (x26) -- (x43);
          \draw (x43) -- (x44);
          \draw (x44) -- (x45);
          \draw (x45) -- (x46);
          \draw (x46) -- (x18);
          \draw (x43) -- (x47);
          \draw (x47) -- (x44);
          \draw (x45) -- (x48);
          \draw (x48) -- (x46);
          \draw (x27) -- (x49);
          \draw (x49) -- (x50);
          \draw (x50) -- (x51);
          \draw (x51) -- (x52);
          \draw (x52) -- (x13);
          \draw (x49) -- (x53);
          \draw (x53) -- (x50);
          \draw (x51) -- (x54);
          \draw (x54) -- (x52);

          \node[lvertex] at (x22) {};
          \draw[lface] (x22) -- (n19);
          \draw[lface] (x22) -- (n24);
        \end{scope}
      \end{tikzpicture}
      {\par\noindent\centering\footnotesize (e) $\mathcal{P}_F$}
    }
  \end{center}
  \caption{$\mathcal{P}_N = \mathcal{P}_P$, the edge patch of $\mathcal{P}_N$ and $\mathcal{P}_F$}
  \label{fig:expansion:patch:3:4}
\end{figure}
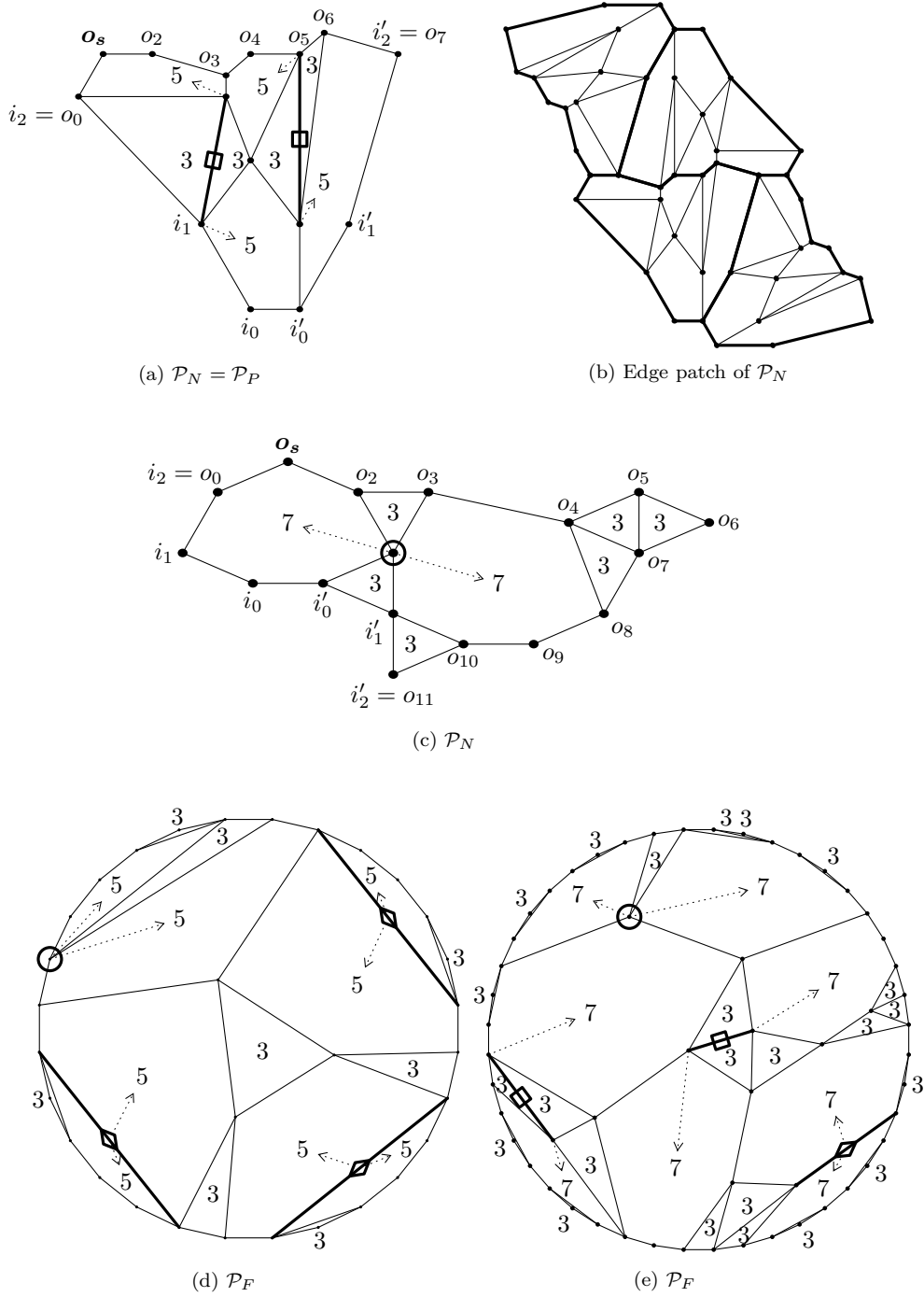

With our whole machinery at work, we can now state the proofs of our
main theorems easily:
\begin{proof}[Proof of Theorem~\ref{eberhard:3:5:4}]
  An expansion $4$-patch $\mathcal{P}_N$ with outer tuple
  $o = (1, 2, 1, 3, \allowbreak 2, 3)$ is shown in
  Fig.~\ref{fig:expansion:patch:3:4}(a) and a corresponding
  $o$-$4$-gonal $4$-patch $\mathcal{P}_F$ is shown in
  Fig.~\ref{fig:expansion:patch:3:4}(d), both consisting of
  triangles and pentagons. By using
  Algorithm~\ref{alg:edge:replacement:3:4:1},
  Algorithm~\ref{alg:edge:replacement:3:4:2} and
  Algorithm~\ref{alg:edge:replacement:3:4:3} as indicated we get
  $4$-patches consisting of only triangles and $(3k+5)$-gons,
  $k \in \mathbb{N}$. We can see in Fig.~\ref{fig:expansion:patch:3:4}(b)
  that $\mathcal{P}_N$ has the polyhedral property, thus we can
  apply Theorem~\ref{thm:main:const} with
  $\mathcal{P}_P \defeq \mathcal{P}_N$.
\end{proof}

\begin{proof}[Proof of Theorem~\ref{eberhard:3:7:4}]
  An expansion $4$-patch $\mathcal{P}_N$ with outer tuple
  $o = (2, 2, 3, 2, 1, \allowbreak 3, 2, 1, 2, 2)$ is shown in
  Fig.~\ref{fig:expansion:patch:3:4}(a) and a corresponding
  $o$-$4$-gonal $4$-patch $\mathcal{P}_F$ is shown in
  Fig.~\ref{fig:expansion:patch:3:4}(c), both of which consist of
  only triangles and heptagons. By using
  Algorithm~\ref{alg:edge:replacement:3:4:1},
  Algorithm~\ref{alg:edge:replacement:3:4:2} and
  Algorithm~\ref{alg:edge:replacement:3:4:3} as indicated we get
  $4$-patches consisting of only triangles and $(3k + 7)$-gons,
  $k \in \mathbb{N}$. We can reuse the $4$-patch $\mathcal{P}_P$ with the
  polyhedral property in Fig.~\ref{fig:expansion:patch:3:4}(a) from
  the last theorem and after application of
  Algorithm~\ref{alg:edge:replacement:3:4:1} it likewise contains
  triangles and $(3k + 7)$-gons. Thus we can apply
  Theorem~\ref{thm:main:const}.
\end{proof}

\newpage

\end{document}